\numberwithin{equation}{section}
\theoremstyle{plain}
\newtheorem{theorem}{Theorem}[section]
\newtheorem{lemma}[theorem]{Lemma}
\newtheorem{corollary}[theorem]{Corollary}
\newtheorem{proposition}[theorem]{Proposition}
\theoremstyle{definition}
\newtheorem{definition}[theorem]{Definition}
\newtheorem{case[theorem]}{Case}
\theoremstyle{remark}
\newtheorem{remark}[theorem]{Remark}
\numberwithin{equation}{section}
\def\bb #1{ {\mathbb #1} }
\begin{document}

\title{\parbox{14cm}{\centering{Group actions and geometric combinatorics in ${\mathbb F}_q^d$}}}

%    Information for first author

\author{M. Bennett, D. Hart, A. Iosevich, J. Pakianathan and M. Rudnev}

\date{today}

\address{Department of Mathematics, University of Rochester, Rochester, NY}

\email{bennett@math.rochester.edu}
\email{iosevich@math.rochester.edu}
\email{jonathan.pakianathan@math.rochester.edu}

\address{Rockhurst University}

\email{dnh0a9@gmail.com}

\address{University of Bristol} 

\email{m.rudnev@bristol.ac.uk}

\thanks{The work of the second listed author was partially supported by the NSF Grant DMS10-45404}

\begin{abstract} In this paper we apply a group action approach to the study of Erd\H os-Falconer type problems in vector spaces over finite fields and use it to obtain non-trivial exponents for the distribution of simplices. We prove that there exists $s_0(d)<d$ such that if $E \subset {\mathbb F}_q^d$, $d \ge 2$, with $|E| \ge Cq^{s_0}$, then $|T^d_d(E)| \ge C'q^{d+1 \choose 2}$, where $T^d_k(E)$ denotes the set of congruence classes of $k$-dimensional simplices determined by $k+1$-tuples of points from $E$. Non-trivial exponents were previously obtained by Chapman, Erdogan, Hart, Iosevich and Koh (\cite{CEHIK12}) for $T^d_k(E)$ with $2 \leq k \leq d-1$. A non-trivial result for $T^2_2(E)$ in the plane was obtained by Bennett, Iosevich and Pakianathan (\cite{BIP12}). These results are significantly generalized and improved in this paper. In particular, we establish the Wolff exponent $\frac{4}{3}$, previously established in \cite{CEHIK12} for the $q\equiv3\mbox{ mod }4$ case to the case $q\equiv1\mbox{ mod }4$, and this results in a new sum-product type inequality. We also obtain non-trivial results for subsets of the sphere in ${\mathbb F}_q^d$, where previous methods have yielded nothing. The key to our approach is a group action perspective which quickly leads to natural and effective formulae in the style of the classical Mattila integral from geometric measure theory.

\end{abstract}

\maketitle

\tableofcontents

\section{Introduction}

\vskip.125in

This paper is about a class of Erd\H os-Falconer type problems in vector spaces over finite fields. These are problems that ask for the minimal size of the set $E \subset {\mathbb F}_q^d$ that guarantees that the set of values of a sufficiently non-trivial map
$$\Lambda: E^{k+1} \equiv E \times E \times \dots \times E \to {\mathbb F}_q^{l},$$
%$1 \leq k \leq d$, $1 \leq l \leq {k+1 \choose 2}$,
contains a positive proportion of, or in some cases all the possible values, in ${\mathbb F}_q^{l}$. One can also simply ask for the lower bound on $\Lambda(E \times E \times \dots \times E)$ in terms of the size $E$.

Perhaps the most celebrated of these problems is the Erd\H os-Falconer distance problem in ${\mathbb F}_q^d$, introduced by Bourgain, Katz and Tao in \cite{BKT04} in the case when $d=2$ and $q$ is prime and studied in higher dimensions over general fields by Iosevich and Rudnev (\cite{IR07}) and others. In this case, $k=1$ and $\Lambda: E \times E \to {\mathbb F}_q$ is given by

\begin{equation} \label{defimama} \Lambda(x,y)=||x-y|| \equiv {(x_1-y_1)}^2+\dots+{(x_d-y_d)}^2. \end{equation}

This mapping is not a norm since we do not impose any metric structure on ${\mathbb F}_q^d$, but it does share an important and interesting feature of the Euclidean norm: it is invariant under orthogonal matrices. This simple observation will play a key role in this paper.

The best known results on the Erd\H os-Falconer distance problem in finite fields are due to the third and fifth listed authors in \cite{IR07} in higher dimensions and to Chapman, Erdogan, Koh and the 2nd, 3rd and 4th listed authors of this paper (\cite{CEHIK12}). The former proved that if the size of $E$ is greater than $2q^{\frac{d+1}{2}}$, then $\Delta(E)={\Bbb F}_q$, while the latter showed that if $d=2$, $q \equiv 3 \bmod 4$, and the size of $E$ is greater than $Cq^{\frac{4}{3}}$, then the size of $\Delta(E)$ is greater than $cq$. In this paper, we shall prove this result without any restrictions on $q$.

In \cite{IR07} the authors proved that one cannot in general obtain $cq$ distances if the size of $E$ is $\leq q^{\frac{d}{2}}$. In \cite{HIKR11}, the size of the critical exponent was increased to $\frac{d+1}{2}$ in odd dimensions, thus showing that the result in \cite{IR07} is best possible in this setting. The question of whether the $\frac{4}{3}$ exponent in two dimensions or the exponent $\frac{d+1}{2}$ in higher even dimensions can be reduced is still open.

The Erd\H os-Falconer problem in vector spaces over finite fields may be viewed as an amalgam of the Erd\H os distance problem, which asks for the smallest number of distinct distances determined by a finite subset of ${\mathbb R}^d$, $d \ge 2$, and the Falconer distance problem (see e.g. \cite{Erd05}) which asks for the smallest Hausdorff dimension of a subset $E$ of ${\mathbb R}^d$, $d \ge 2$, such that the Lebesgue measure of the distance set
$$\Delta(E)=\{|x-y|: x,y \in E\}$$ is positive.

The Erd\H os distance problem in the plane was recently solved by Guth and Katz (\cite{GK10}). In higher dimensions the conjecture is still open, with the best results due to Solymosi and Vu (\cite{SV04}). The Falconer distance conjecture is still open, with the best results due to Wolff in two dimensions and Erdogan in higher dimensions. See \cite{CEHIK12} and the references contained therein for the best known exponents in the finite field setting. 

A natural generalization of the distance problem in vector spaces over finite fields is the problem of distribution of simplices. Before we describe the necessary definitions, we note that most of the results of this paper work with respect to any non-degenerate quadratic form $Q$. Such forms as weighted dot products yield different notions of distances. The quadratic form $Q$ has an associated linear isometry group $O(Q)$ called the orthogonal group associated to $Q$. Basic information on such forms can be found in the appendix.

\begin{definition}\label{congdef} Let $Q$ be a non-degenerate quadratic form on $\mathbb{F}_q^d$. We say that two $k$-simplices in $\mathbb{F}_q^d$ with vertices $(x_1, . . ., x_{k+1}), (y_1, . . ., y_{k+1})$ are in the same congruence class if there exists
$\theta \in O(Q)$ and $z \in \mathbb{F}_q^d$ so that $z + \theta(x_i) = y_i$ for $i = 1, 2, . . . , k+1$, or equivalently that there is $g \in \mathbb{F}_q^d \rtimes O(Q)$,
the ``Euclidean" group of motions over $\mathbb{F}_q$, such that $g(x_i)=y_i$ for all $1 \leq i \leq k+1$.  For non-degenerate simplices (see comment below and appendix), this is equivalent to saying that $\| x_i - x_j \|=\| y_i - y_j \|$ for all $1 \leq i < j \leq k+1$. Thus each (non-degenerate)
congruence class is determined by an {\bf ordered} tuple of distances which can be encoded by a symmetric $(k+1) \times (k+1)$ distance matrix
$\mathbb{D}$ with entries $d_{ij} =\| x_i - x_j \|$.

Henceforth we use the notation $O_d(\mathbb F_q)$ for the group of
 orthogonal matrices with elements in $\mathbb F_q$, with
respect to the standard dot product. We denote the set of congruence classes of $k$-simplices determined by $E \subset {\mathbb F}_q^d$ by $T^d_{k,Q}(E)$.
We use $T^d_k(E)$  in the case of the standard dot product.
\end{definition}

\begin{remark} Recall a $k$-simplex is called non-degenerate if its vertices $x_0, \dots, x_k$ are affinely independent, i.e. $x_1-x_0, x_2-x_0, \dots, x_k-x_0$ are linearly independent. The fact that two such $k$-simplices are congruent if and only if they have the same ordered set of distances is well-known and a proof can be found in \cite{CEHIK12}. If the $k$-simplices are degenerate, this is no longer true as, for example, there are $2$-simplices (triangles) with all pair distances zero which are collinear and also ones which are not and these two types cannot be congruent. However the set of degenerate $k$-simplices and their ordered distances is $o(\text{set of all possible congruent types})$ when $k \leq d$ and so this subtlety does not effect the arguments made in this paper in any significant way - see the appendix for more details. \end{remark}

\begin{remark} One can also consider unordered simplices and their corresponding unordered distances  where one further identifies configurations which are permutations of each other. The counts change by a $(k+1)!$, the size of the corresponding symmetric group, which is independent of $q$ and so similar results to the ordered case are easily obtained.\end{remark} 

\begin{definition} Let $Q$ be a non-degenerate quadratic form on $\mathbb{F}_q^d$. We say that two $k$-simplices in $\mathbb{F}_q^d$ with vertices $(x_1, . . ., x_{k+1}), (y_1, . . ., y_{k+1})$ are in the same similarity class if there exists $\theta \in O(Q)$, $z \in \mathbb{F}_q^d$, and $r \in \mathbb{F}_q^*$ so that $z + r\theta(x_i) = y_i$ for $i = 1, . . . , k+1$. Equivalently, two (non-degenerate) simplices are in the same $Q$-similarity class if
there exists a dilation $r \in \mathbb{F}_q^*$ so that $\|x_i-x_j\| = \|r(y_{i}-y_{j})\|$ for $1\leq i < j \leq k$. We can then define each similarity class by
a distance matrix $\mathbb{D}$ as before, modulo nonzero scaling.
We denote the set of similarity classes of $k$-simplices determined by $E \subset {\mathbb F}_q^d$ by $S^d_{k,Q}(E)$ and
by $S^d_k(E)$ in the case of the standard dot product. \end{definition}

In terms of the paradigm described above, we have that $ \Lambda(x^1, \dots, x^{k+1})$ is determined by ${k+1 \choose 2}$ entries $||x^i-x^j||$, $i < j$. Therefore, the question we ask is, ``How large does $E \subset {\mathbb F}_q^d$ need to be to ensure that $|T^d_k(E)|\gg q^{k+1 \choose 2}$ and $|S^d_k(E)|\gg q^{{k+1 \choose 2}-1}$?" Here and throughout, $X\ll Y$ means that there exists $C>0$ such that $X \leq CY$, and $|S|$ (with $S$ a finite set) denotes the number of elements of $S$. Substantial work has been done in this direction over the past few years. See, for example, \cite{CEHIK12}, \cite{BIP12} and the references contained therein. In this paper we improve the previous exponents considerably and, more importantly, we introduce a group theoretic perspective on this question which, in addition to contributing to the problem at hand, opens up a number of fruitful research directions.

The basic starting point behind our approach in this paper is a simple observation that if $d=2$ and
$$||x-y||=||x'-y'|| \not=0,$$ then there exists a unique $\theta \in SO_2({\mathbb F}_q)$ (the group of orthogonal matrices with determinant 1) such that $x'-y'=g(x-y)$. When $d>2$, $\theta$ is not unique, but we can achieve uniqueness by considering the quotient $O_d({\mathbb F}_q)/ O_{d-1}({\mathbb F}_q)$, i.e., there
is a unique $O_{d-1}({\mathbb F}_q)$-coset of elements $g$ with the property. As a result the study of the distance set problem in ${\mathbb F}_q$ reduces to the investigation the the $L^2$ norm, in ${\mathbb F}_q^d$ and $O_d({\mathbb F}_q)$, of the counting function
$$ \nu_{\theta}(x)=|\{(u,v) \in E \times E: u-\theta v=x \}| \ \text{with} \ \theta \in O_d({\mathbb F}_q).$$

In the case of $k$-simplices, the group theoretic approach leads to the study of the $L^{k+1}$ norm of $\nu_g$, which proves considerably more efficient than the bootstrapping approach in \cite{CEHIK12} or the combinatorial techniques in \cite{BIP12}. Moreover, this approach gives us results in the most natural case $k=d>2$ which was outside of the reach of the previously employed methods in dimensions three and greater.

\vskip.125in

\subsection{Statement of main results}

Our first result gives us a non-trivial exponent for any class of simplices in ${\mathbb F}_q^d$ for any non-degenerate quadratic form $Q$.

\begin{theorem} \label{main}  Let $Q$ be a non-degenerate quadratic form on $\mathbb{F}_q^d$ where $q$ is odd and $d \geq 2$. Let $E \subset \mathbb{F}_q^d$.
There exist constants $C$, $c$, depending only on $1 \leq k \leq d$, such that:

if ${\displaystyle |E| \geq  C q^{d - \frac{d-1}{k+1}}}$ then ${\displaystyle |T^d_{k,Q}(E)| \geq  cq^{{k+1} \choose 2}}$.

\end{theorem}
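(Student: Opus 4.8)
The plan is to follow the group-action philosophy described in the introduction: encode congruence classes of $k$-simplices via the orbit structure of the motion group $G = \mathbb{F}_q^d \rtimes O(Q)$, and reduce a lower bound on $|T^d_{k,Q}(E)|$ to an upper bound on a suitable moment of the counting function $\nu_g(x) = |\{(u,v)\in E\times E : u - g^{-1}\!\cdot v = x\}|$ where $g$ ranges over $O(Q)$. Concretely, I would first set up the counting identity: the number of ordered pairs of congruent $(k+1)$-tuples from $E$ equals
\begin{equation}\label{eq:basic}
  \sum_{g \in G} |\{(x^1,\dots,x^{k+1}) \in E^{k+1} : g\cdot x^i \in E \text{ for all } i\}|
  = |O(Q)| \sum_{\theta \in O(Q)} \int \nu_\theta(x)^{k+1}\, dx,
\end{equation}
up to the harmless translation/normalization bookkeeping and the degenerate-simplex error which is $o(q^{\binom{k+1}{2}})$ by the remark following Definition 1.1. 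Here I am using that fixing the distances $\|x^i - x^j\|$ pins down the congruence class, and that the number of elements of $G$ fixing a non-degenerate $k$-simplex pointwise is $|O_{d-k}(\mathbb{F}_q)|$, which is a $q$-power factor independent of $E$. Then by Cauchy–Schwarz in the standard way (counting pairs of congruent simplices against the number of realized classes),
\begin{equation}\label{eq:CS}
  |T^d_{k,Q}(E)| \;\geq\; \frac{\bigl(|E|^{k+1}\bigr)^2}{\text{(number of congruent ordered pairs)}},
\end{equation}
so it suffices to show the denominator is $\ll |E|^{2(k+1)} q^{-\binom{k+1}{2}}$ once $|E| \geq Cq^{d - (d-1)/(k+1)}$.

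The heart of the matter is therefore estimating $\sum_{\theta} \int \nu_\theta(x)^{k+1}\,dx$. I would expand $\nu_\theta(x)^{k+1}$ combinatorially as a sum over $(k+1)$-tuples of pairs $(u^j, v^j) \in E\times E$ with $u^j - \theta v^j$ all equal, i.e. $u^j - u^1 = \theta(v^j - v^1)$ for $j = 2,\dots,k+1$. Summing over $\theta \in O(Q)$ and using orthogonality (Fourier analysis on $O(Q)$, or more elementarily counting $\theta$ with prescribed action on a tuple of vectors of given Gram matrix), the main term is the "diagonal'' contribution where the Gram matrix of $(u^j - u^1)_j$ matches that of $(v^j - v^1)_j$; this produces exactly the expected size $\sim |E|^{2(k+1)} q^{-\binom{k+1}{2}} \cdot |O(Q)|$, and the threshold $|E| \geq Cq^{d - (d-1)/(k+1)}$ is precisely what is needed to dominate the off-diagonal/degenerate terms (those tuples that are linearly degenerate or whose Gram matrices are singular). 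The power-saving bookkeeping here mirrors the incidence bounds on spheres $\{x : \|x\| = t\}$ in $\mathbb{F}_q^d$, each of which has $\sim q^{d-1}$ points, and the exponent $d - \frac{d-1}{k+1}$ is exactly the balance point: raising $|E|$ to the $(k+1)$st power and dividing by $q^{(d-1)k}$ (the cost of $k$ successive sphere constraints) against $q^{\binom{k+1}{2}}$ (the number of distance entries) gives $|E|^{k+1} \gg q^{(d-1)k - \binom{k}{2}} \cdot q^{\binom{k}{2}}$... unwinding, one lands on the stated bound.

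I expect the main obstacle to be the orbit-counting step over $O(Q)$ — specifically, controlling $|\{\theta \in O(Q) : \theta w^j = w'^j,\ j=1,\dots,k\}|$ uniformly when the vectors $w^j$ span a subspace on which $Q$ degenerates (an isotropic or partially degenerate configuration), since there the stabilizer can be much larger than the generic $|O_{d-k}(\mathbb{F}_q)| \sim q^{\binom{d-k}{2}}$. The generic case is clean: by Witt's extension theorem two $k$-tuples with the same Gram matrix lie in one $O(Q)$-orbit and the stabilizer has the expected size. The fix is to show the non-generic tuples contribute a lower-order term: the set of $(u^1,\dots,u^{k+1}) \in E^{k+1}$ whose difference vectors have singular Gram matrix is contained in a bounded union of affine-algebraic conditions, hence has size $O(|E|^k \cdot q^{?})$ with a genuine saving, and this is absorbed by the hypothesis on $|E|$. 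This is also where the hypothesis "$q$ odd'' is used, so that $Q$ is diagonalizable and the structure theory of $O(Q)$ (Witt's theorem, orders of orthogonal groups) applies cleanly; the uniformity in $k$ of the constants $C, c$ comes from the fact that all these $q$-power exponents are polynomial in $d$ and $k$ with bounded coefficients. Once the diagonal term is isolated as the main term and everything else is shown to be $o$ of it under the size hypothesis, combining with \eqref{eq:CS} finishes the proof.
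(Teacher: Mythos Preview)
Your setup --- Cauchy--Schwarz, the counting function $\nu_\theta(z)=|\{(u,v)\in E^2:u-\theta v=z\}|$, and the reduction to bounding $\sum_{\theta,z}\nu_\theta(z)^{k+1}$ --- matches the paper exactly. The gap is at the key analytic step. You propose to expand $\nu_\theta^{k+1}$ and sum over $\theta$, isolating a ``diagonal'' contribution where the Gram matrices of $(u^j-u^1)_j$ and $(v^j-v^1)_j$ agree. But \emph{every} nonzero term in $\sum_{\theta,z}\nu_\theta(z)^{k+1}$ already has matching Gram matrices: the existence of $\theta\in O(Q)$ forces this via Witt. So the sum you must bound \emph{is} the (stabilizer-weighted) count of congruent pairs of $(k+1)$-tuples; there is no separate off-diagonal to discard, and you give no mechanism for showing this count is $\lesssim |E|^{2(k+1)}q^{-\binom{k+1}{2}}\cdot(\text{stabilizer factor})$. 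Asserting that the main term ``produces exactly the expected size'' is the heuristic, not the argument. Your numerology (``$|E|^{k+1}\gg q^{(d-1)k-\binom{k}{2}}\cdot q^{\binom{k}{2}}$'') simplifies to $|E|^{k+1}\gg q^{dk-k}$, not the correct threshold $|E|^{k+1}\gg q^{dk+1}$. And the obstacle you flag --- stabilizer blow-up on degenerate spans --- is a non-issue: the paper dispatches it in one line by using the \emph{minimum} stabilizer size $|O_{d-k}(\mathbb F_q)|$ as a uniform lower bound in $\sum_{\mathbb D}\mu^2(\mathbb D)\le|O_{d-k}|^{-1}\sum_{\theta,z}\nu_\theta^{k+1}(z)$ (larger stabilizers only help that inequality).

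What the paper actually does is reduce the $(k+1)$-st moment to the second moment via a Taylor-remainder inequality (Lemma~2.1):
\[
\sum_z \nu_\theta(z)^{k+1}\ \le\ q^{-dk}|E|^{2k+2}\ +\ \tbinom{k+1}{2}\,q^d\,|E|^{k-1}\sum_{\xi\ne 0}|\hat\nu_\theta(\xi)|^2,
\]
using $\|\nu_\theta\|_1=|E|^2$ and $\|\nu_\theta\|_\infty\le|E|$. Then the Fourier identity $\hat\nu_\theta(\xi)=q^d\hat E(-\xi)\hat E(\theta^T\xi)$, summed over $\theta$ (which turns $\theta^T\xi$ into a sphere average) and followed by two applications of Plancherel after relaxing $\|\eta\|=\|\xi\|$ to all $\eta\ne 0$, gives $\sum_{\theta,\xi\ne0}|\hat\nu_\theta(\xi)|^2\le|O_{d-1}(\mathbb F_q)|\,|E|^2$. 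The threshold $|E|^{k+1}\gg q^{dk+1}$ is exactly where the first (mean) term dominates the second (variance) term after the $|O_{d-k}|$ and $|O(Q)|$ bookkeeping. Your proposal is missing this variance-reduction idea entirely; without it, or a genuine substitute that produces the same $L^2$ flattening, the argument does not close.
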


\begin{theorem} \label{main2}
Let $E \subset \mathbb{F}_q^2$, where $q$ is odd. There exist constants $C$, $c$, such that:

\begin{equation}\label{triang}\mbox{if } |E| \geq Cq^{\frac{8}{5}}, \mbox{ then } |T^2_2(E)|\geq  c{q^3};\end{equation}
\begin{equation}\label{dist}\mbox{if } |E| \geq q^{\frac{4}{3}}, \mbox{ then } |T^2_1(E)|\geq  c{q}.\end{equation}
\end{theorem}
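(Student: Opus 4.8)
The plan is to run, for both parts, the same two‑step scheme: a Cauchy--Schwarz inequality bounding the number of congruence classes from below by $(\text{number of simplices})^2$ over a ``simplex energy'', followed by an estimate of that energy through the action of the group of motions $\mathbb F_q^2\rtimes O_2(\mathbb F_q)$ and Fourier analysis. Since degenerate simplices are negligible (Remarks after Definition~\ref{congdef}), the number of $k$-simplices in $E$ is $\asymp|E|^{k+1}$ and it suffices to bound the energy
\[
\mathcal E_k:=\#\bigl\{(x^1,\dots,x^{k+1},y^1,\dots,y^{k+1})\in E^{2(k+1)}:\ \|x^i-x^j\|=\|y^i-y^j\|\ \text{for all }i<j\bigr\}
\]
by $O\bigl(|E|^{2(k+1)}q^{-\binom{k+1}{2}}\bigr)$. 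Writing out the common rigid motion, $\mathcal E_k\approx\sum_{g\in\mathbb F_q^2\rtimes O_2}|E\cap gE|^{k+1}=\sum_{\theta\in O_2}\sum_z\nu_\theta(z)^{k+1}=\sum_{\theta\in O_2}\|\nu_\theta\|_{k+1}^{k+1}$ with $\nu_\theta$ as in the introduction; the term $\theta=\mathrm{id}$ is an additive‑energy term, and on the Fourier side one gets the Mattila‑type identity $\sum_{\theta\in O_2}\|\nu_\theta\|_2^2=q^{-2}\sum_r W(r)^2+q^{-2}|E|^2W(0)$, where $W(r)=\sum_{\|\xi\|=r}|\widehat{1_E}(\xi)|^2$ is the finite‑field spherical average.

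For \eqref{dist} ($k=1$) I would first pass to pinned distances: with $\mu_x(t)=|\{y\in E:\|x-y\|=t\}|$, Cauchy--Schwarz gives $|T^2_1(E)|\ge|\{\|x-y\|:y\in E\}|\ge|E|^2/\sum_t\mu_x(t)^2$ for each $x$, so by pigeonholing on $x$ it is enough to prove $\sum_{x\in E}\sum_t\mu_x(t)^2=\#\{(x,y,y')\in E^3:\|x-y\|=\|x-y'\|\}\ll|E|^3/q$. The group action linearizes this: for $y\ne y'$ the locus $\{x:\|x-y\|=\|x-y'\|\}$ is the line $\ell_{y,y'}=\{x:2x\cdot(y'-y)=\|y'\|-\|y\|\}$, and $\ell_{y,y'}=\ell$ exactly when $y'=\rho_\ell(y)$ is the $Q$-reflection of $y$ across $\ell$ (valid whenever $\|y-y'\|\ne0$). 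Hence $\#\{(x,y,y')\}=|E|^2+\sum_\ell|E\cap\ell|\,r(\ell)$ with $r(\ell)=|\{y\in E:\rho_\ell(y)\in E\}|$, and over the non‑isotropic $\ell$ one estimates this by Cauchy--Schwarz against $\sum_\ell|E\cap\ell|^2\ll|E|^2+q|E|$ (immediate, since two distinct points lie on a unique line) together with an incidence bound in $\mathbb F_q^2$ for $\sum_\ell r(\ell)^2$; optimizing produces exactly the Wolff threshold $|E|\gg q^{4/3}$ and reproves the $q\equiv3\bmod4$ case of \cite{CEHIK12}. When $q\equiv1\bmod4$ there are additionally isotropic bisector lines $\ell$ — these are precisely the bisectors of pairs with $\|y-y'\|=0$ — across which the ``reflection'' degenerates to a rational map $y\mapsto y+\tfrac{c}{2\,y\cdot e}\,e$ along a light‑cone direction $e$, so controlling their contribution is a genuine sum‑product question about $E$. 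Establishing this last estimate is the main obstacle; it is the new ingredient that removes the hypothesis on $q$ and yields the sum‑product inequality of the abstract.

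For \eqref{triang} ($k=2$) the scheme reduces matters to showing $\mathcal E_2\approx\sum_{\theta\in O_2}\|\nu_\theta\|_3^3\ll|E|^6/q^3$ for $|E|\gg q^{8/5}$. Two facts are available: $\|\nu_\theta\|_1=|E|^2$ for each $\theta$, and $\sum_{\theta\in O_2}\|\nu_\theta\|_2^2\ll|E|^4/q$ once $|E|\gg q^{4/3}$ — the latter is the distance energy $\mathcal E_1$, bounded as in the previous paragraph via the Mattila identity. Interpolating these $L^1$ and $L^2$ estimates against a dyadic decomposition of the level sets $\{(\theta,z):\nu_\theta(z)\approx\lambda\}$, and feeding the top level sets back into the distance estimate (applied to the pairs of points that realize a large overlap $|E\cap(\theta E+z)|$), should give $\sum_\theta\|\nu_\theta\|_3^3\ll|E|^6/q^3$ in the stated range, the exponent $\tfrac85$ emerging from the optimization. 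The delicate point, beyond the sum‑product input inherited from \eqref{dist}, is controlling $\|\nu_\theta\|_\infty=\max_z|E\cap(\theta E+z)|$: the trivial bound $\|\nu_\theta\|_\infty\le|E|$ only reaches the exponent $\tfrac53$ that Theorem~\ref{main} gives for $k=2$, $d=2$, so the gain to $\tfrac85$ rests on a sharper treatment of how large a rotated‑and‑translated copy of $E$ can overlap $E$.
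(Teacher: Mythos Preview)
Your framework is right — Cauchy--Schwarz, the group action identity $\mathcal E_k \approx \sum_{\theta,z}\nu_\theta(z)^{k+1}$, and the Mattila-type spherical averages $W(r)$ are exactly what the paper uses — but the diagnosis of where the gain comes from is off in both parts, and the actual new steps are not supplied.

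\medskip
\textbf{For \eqref{triang}.} You write that the trivial bound $\|\nu_\theta\|_\infty\le|E|$ only reaches $5/3$ and that the gain to $8/5$ ``rests on a sharper treatment of how large a rotated-and-translated copy of $E$ can overlap $E$''. This is not what happens: the paper keeps the trivial $\|\nu_\theta\|_\infty\le|E|$ throughout and still obtains $8/5$. The improvement comes from a sharper \emph{variance} bound, not a sharper sup bound. Lemma~\ref{L1} gives
\[
\sum_{\theta,z}\nu_\theta(z)^3 \;\le\; \frac{|E|^6}{q^3}\;+\;3|E|\cdot q^2\!\!\sum_{\theta,\,\xi\ne 0}|\hat\nu_\theta(\xi)|^2,
\]
and the paper then invokes Lemma~\ref{mlem} (the $L^2$ estimate for spherical averages from \cite{CEHIK12}) to get
\[
\sum_{\theta,\,\xi\ne 0}|\hat\nu_\theta(\xi)|^2 \;=\; q^4\sum_{t\ne 0}\sigma_E(t)^2\;\le\;\frac{\sqrt3\,|E|^{5/2}}{q},
\]
yielding $\sum_{\theta,z}\nu_\theta^3\ll |E|^6/q^3 + q|E|^{7/2}$ and hence the threshold $|E|\gg q^{8/5}$. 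Your proposed input $\sum_\theta\|\nu_\theta\|_2^2\ll|E|^4/q$ is strictly weaker: it bounds only $\sum_\theta\|\nu_\theta\|_2^2$, not the \emph{centred} second moment $\sum_\theta(\|\nu_\theta\|_2^2-|E|^4/q^2)$, and feeding it into $\|\nu_\theta\|_3^3\le\|\nu_\theta\|_\infty\|\nu_\theta\|_2^2$ (or into Lemma~\ref{L1}) gives only $|E|^5/q$, which forces $|E|\gg q^2$. The missing ingredient is precisely Lemma~\ref{mlem}; no dyadic level-set argument or improved overlap bound is needed.

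\medskip
\textbf{For \eqref{dist}.} Your bisector/reflection route is a different argument from the paper's; it is essentially the geometric approach behind the $q\equiv 3\bmod 4$ result in \cite{CEHIK12}, and you are candid that the isotropic-bisector contribution (pairs with $\|y-y'\|=0$) is left as an open ``sum-product question''. That is exactly the case the theorem is claiming to settle, so as stated the proposal does not prove \eqref{dist}. The paper proceeds differently: it stays entirely on the Fourier side, bounding $\sum_{\theta,\xi\ne 0}|\hat\nu_\theta(\xi)|^2$ via $\sum_t\sigma_E(t)^2$ and Lemma~\ref{mlem}, and handles the null-circle term $\sigma_E(0)$ by an elementary combinatorial pruning in the null coordinates $(x_+,x_-)$. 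One discards the points whose \emph{both} null-line coordinates are ``rich'' (carry $\gtrsim|E|^{1/2}$ points), losing at most half of $E$; on the remainder either both coordinates are poor (then $\sigma_E(0)\ll|E|^{3/2}/q^3$, matching \eqref{spav}), or one splits $E=E_1\cup E_2$ with disjoint projections on the rich axis and runs the argument bilinearly in $\nu_1,\nu_2$. No sum-product estimate is used; the $q\equiv 1\bmod 4$ case is reduced to the same spherical-average lemma plus this pruning.
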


\begin{remark} Theorem \ref{main} improves over the best currently-known result of $q^\frac{d+k}{2}$ from \cite{CEHIK12} as long as the condition $k > d-2$ holds. It is important to note that the case $k=d$ is perhaps the most natural case as it deals with $d$-dimensional simplices in $d$-dimensional space.

The estimate \eqref{triang} of Theorem \ref{main2} improves over the result of $|E| \gg  q^{\frac{7}{4}}$ obtained in \cite{BIP12}. The estimate \eqref{dist} extends the  result  in \cite{CEHIK12} (Theorem 2.2), valid for $q\equiv 3\mbox{ mod }4$, to the case $q\equiv 1\mbox{ mod }4$.

The values of the constants $C,c$ can be easily deduced from the proofs below where they often appear explicitly.\end{remark}

The estimate \eqref{dist} of Theorem \ref{main2} (and its proof) has the following sum-product inequality as a corollary. Let 
$$ A\cdot B=\{ab:\,a\in A, b\in B\}. $$

\vskip.125in 

\begin{corollary}\label{spcor} Let $q$ be an odd prime and $X,Y\subseteq \mathbb F_q.$ There exist constants $C$, $c$, such that

if ${\displaystyle |X||Y| \geq Cq^{\frac{4}{3}}}$ then 
$$ |(X\pm X)\cdot (Y\pm Y)| \geq cq.$$ 
\end{corollary}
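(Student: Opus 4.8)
The plan is to realise the product set as a distance set for a hyperbolic quadratic form and then apply \eqref{dist} (and, in part, its proof). Concretely, let $Q$ be the non-degenerate quadratic form $Q(a,b)=ab$ on $\mathbb F_q^2$ and set $E=X\times Y\subseteq\mathbb F_q^2$. For $p=(x,y)$ and $p'=(x',y')$ in $E$ we have $\|p-p'\|_Q=Q(p-p')=(x-x')(y-y')$, so the set of $Q$-distances spanned by $E$ is exactly $(X-X)\cdot(Y-Y)$; as the degenerate classes are negligible (the remarks after Definition \ref{congdef}), $|T^2_{1,Q}(E)|=|(X-X)(Y-Y)|+O(1)$. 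Moreover $|E|=|X||Y|$, so, taking the constant $C$ in the statement to be at least $1$, the hypothesis gives $|E|\ge q^{4/3}$.

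Next I would quote \eqref{dist} to get $|T^2_{1,Q}(E)|\ge cq$, hence $|(X-X)(Y-Y)|\ge cq$. The point to mind is that \eqref{dist} is phrased for the standard dot product. When $q\equiv1\bmod4$ there is nothing to do: $-1$ is a square, $a^2+b^2=(a+\sqrt{-1}\,b)(a-\sqrt{-1}\,b)$, so $Q$ is $\mathbb F_q$-linearly equivalent to the standard form and \eqref{dist} applies verbatim. This is exactly why the extension of \eqref{dist} from the $q\equiv3\bmod4$ case treated in \cite{CEHIK12} to $q\equiv1\bmod4$ yields the new sum-product type inequality promised in the remark above. When $q\equiv3\bmod4$ the standard form is anisotropic, and one instead invokes the \emph{proof} of \eqref{dist}: it is the group-action argument of this paper, which (as pointed out in the introduction) runs for an arbitrary non-degenerate quadratic form, so in particular it delivers the $4/3$ threshold for the hyperbolic $Q$ as well.

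Finally, the remaining sign patterns. Repeating the computation with $E$ replaced by $(X\cup(-X))\times(Y\cup(-Y))$ — which still has at least $|X||Y|\ge q^{4/3}$ points — produces a set whose $Q$-distance set contains all four of $(X\pm X)\cdot(Y\pm Y)$, so at least one of them has $\gg q$ elements; to isolate a prescribed pattern one uses a bipartite form of \eqref{dist}, e.g. $\{\,\|p-p'\|_Q:\ p\in X\times Y,\ p'\in(-X)\times(-Y)\,\}=(X+X)\cdot(Y+Y)$. I do not expect any analytic obstacle here: once \eqref{dist} is granted, the whole argument is a linear change of variables. The one thing that actually requires attention is the quadratic-form bookkeeping in the middle step — keeping track of which forms on $\mathbb F_q^2$ are $\mathbb F_q$-equivalent as $q$ varies modulo $4$, and checking that the $4/3$ exponent genuinely transfers to the hyperbolic form (which it does, the proof being indifferent to the isometry type of $Q$).
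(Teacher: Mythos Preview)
Your approach is essentially the paper's: set $E=X\times Y$ with respect to the hyperbolic form $Q(a,b)=ab$, read off $(X-X)(Y-Y)$ as the $Q$-distance set, apply \eqref{dist} directly when $q\equiv1\bmod4$ (via the linear equivalence of forms you indicate), re-run the proof for the hyperbolic form when $q\equiv3\bmod4$, and handle the other sign patterns via the bipartite variant $T^2_1(E_1,E_2)$ with $E_2=(-X)\times(-Y)$.

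The one place where your write-up is too casual is the $q\equiv3\bmod4$ step. The $4/3$ exponent in \eqref{dist} comes not from the generic group-action machinery (that only gives $3/2$, cf.\ Theorem~\ref{main} with $k=1,d=2$) but from Lemma~\ref{mlem}, which is stated for Euclidean circles. The paper does not claim the argument is ``indifferent to the isometry type of $Q$''; instead it checks that the proof of Lemma~\ref{mlem} (Lemma~4.4 in \cite{CEHIK12}) uses only two properties of the level set $S_t=\{\xi:\|\xi\|=t\}$ for $t\neq0$: that $|S_t|=q(1+o(1))$ and that every element of $S_t+S_t$ has multiplicity at most $2$. Both hold for the hyperbola $x_1x_2=t$, so the lemma transfers. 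It also notes that $SO_2$ is replaced by the dilation group $\mathbb F_q^*$ and that the null-line pruning $E=E_1\cup E_2$ (now in the standard basis) still controls the $t=0$ contribution. Your instinct that the argument transfers is correct, but the justification you give is not the right one.
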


\vskip.125in

Our next result deals with the similarity classes of triangles in dimension two.

\begin{theorem} \label{similar} Let $E \subset \mathbb{F}_q^2$, with $q$ odd. There exist constants $C$, $c$, such that:

if  ${\displaystyle |E| \geq  Cq^{\frac{4}{3}}},$  then  ${\displaystyle |S^2_2(E)| \ge c q^2.}$ \end{theorem}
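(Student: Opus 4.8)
The plan is to reduce the count of similarity classes of triangles to the count of congruence classes, exploiting the extra scaling degree of freedom. A similarity class is a congruence class of distance matrices modulo the $\mathbb F_q^*$-action $\mathbb D \mapsto r\mathbb D$ on the three entries $(d_{12},d_{13},d_{23})$. So morally $|S^2_2(E)|$ should be roughly $|T^2_2(E)|/q$, and if we had $|T^2_2(E)| \gg q^3$ we would be done immediately; but Theorem \ref{main2} only gives that for $|E|\gg q^{8/5}$, whereas here we want the much weaker hypothesis $|E|\gg q^{4/3}$. The point is that to produce $\gg q^2$ \emph{similarity} classes we do not need all congruence classes — we only need the distance matrices to hit $\gg q^2$ distinct points of the projective plane $\mathbb P(\mathbb F_q^3)$, i.e. $\gg q^2$ distinct ratios $(d_{12}:d_{13}:d_{23})$. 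The natural mechanism: first use \eqref{dist} to guarantee that $E$ determines $\gg q$ distinct distances, then pin one vertex and build triangles whose two remaining side-lengths range over a large set, and control the third side.

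First I would run the group-action machinery behind \eqref{dist}. Fix $x^1\in E$ chosen so that the sphere-like level sets $\{x\in E:\|x-x^1\|=t\}$ are, for a positive proportion of $t\in\mathbb F_q$, of size $\gg |E|/q$ (a pigeonhole/popularity argument, since $\sum_t |\{x\in E:\|x-x^1\|=t\}| = |E|$ and \eqref{dist} with $|E|\ge q^{4/3}$ forces $\gg q$ nonempty levels — in fact the proof of \eqref{dist} gives the stronger statement that $\gg q$ levels are ``popular''). Now for a triangle with vertices $x^1,x^2,x^3$, the similarity class is determined by the pair of ratios $\big(\|x^1-x^2\|/\|x^1-x^3\|,\ \|x^2-x^3\|/\|x^1-x^3\|\big)$. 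So it suffices to show that as $x^2,x^3$ range over $E$, the map
\begin{equation}\label{simmap}
(x^2,x^3)\ \longmapsto\ \Big(\,\|x^1-x^2\|:\|x^1-x^3\|:\|x^2-x^3\|\,\Big)\in\mathbb P(\mathbb F_q^3)
\end{equation}
has image of size $\gg q^2$. Writing $a=\|x^1-x^2\|$, $b=\|x^1-x^3\|$, $c=\|x^2-x^3\|$, the value of $c$ given $a,b$ and the ``angle'' between $x^2-x^1$ and $x^3-x^1$ ranges over a full line's worth of values as that angle varies (this is the content of the law of cosines over $\mathbb F_q$, valid away from the isotropic directions); thus for fixed popular $a$ and fixed popular $b$ we get $\gg q$ distinct values of $c$, hence $\gg q$ distinct ratios $c/b$, and letting the popular value $a/b$ vary over $\gg q$ choices produces $\gg q^2$ points in the image of \eqref{simmap}. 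To make ``the angle varies over $\gg q$ values'' rigorous I would use the $O_2(\mathbb F_q)/O_1(\mathbb F_q)$-coset counting from the introduction: the counting function $\nu_\theta(x)=|\{(u,v)\in E\times E: u-\theta v = x\}|$ satisfies the $L^2$ bound established in the proof of \eqref{dist}, and its mass is spread over $\gg q$ group elements $\theta$, which is exactly what controls the spread of the third side length.

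The main obstacle I anticipate is handling the degenerate and isotropic contributions — triangles with a zero side length, collinear triangles, and configurations lying on isotropic lines (present precisely when $q\equiv 1\bmod 4$, which is the case not covered by the earlier literature). These form an $o(q^2)$ fraction of configurations by the appendix's degeneracy estimates, so they can be discarded, but one must be careful that after discarding them the popularity/pigeonhole counts are not destroyed — i.e. that the $\gg q$ popular distances and the $\gg q$ realizable angles survive the removal of $O(q)$ bad values of each. A second, more technical point is that I am implicitly using that the $L^2$ estimate on $\nu_\theta$ underlying \eqref{dist} holds for $q\equiv 1\bmod 4$ as well — this is asserted in Theorem \ref{main2} and its proof, so I would simply cite \eqref{dist} as a black box and feed its internal spread-of-distances conclusion into the ratio count above. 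Assembling these pieces: $|E|\ge Cq^{4/3}$ gives $\gg q$ popular distances from $x^1$ and, via the coset/$\nu_\theta$ argument, $\gg q$ attainable third sides for each popular pair, whence $\gg q^2$ similarity classes, which is the claimed bound $|S^2_2(E)|\ge cq^2$.
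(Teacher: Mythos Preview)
Your proposal has a genuine gap at the step you yourself flag as needing work: the claim that for each popular pair $(a,b)$ of distances from the pinned vertex $x^1$, the third side $c=\|x^2-x^3\|$ ranges over $\gg q$ values. The popularity hypothesis only gives $|A_a|,|A_b|\gg |E|/q\gg q^{1/3}$, where $A_t=\{x\in E:\|x-x^1\|=t\}$. These are subsets of circles of size $\approx q$, and the product $A_a\times A_b$ has size only $\gg q^{2/3}$ --- far too small to force $\gg q$ distinct values of $c$. Your appeal to the $L^2$ bound on $\nu_\theta$ does not rescue this: that bound concerns all of $E$, not the tiny slices $A_a,A_b$, and there is no mechanism to transfer the global spread in $\theta$ down to these slices. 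Without this step the ratio count collapses to at most $q\cdot q^{2/3}=q^{5/3}$ projective points $(a:b:c)$, well short of $q^2$. (There is also a secondary double-counting issue: distinct popular pairs $(a,b)$ with the same ratio $a/b$ need not give disjoint sets of $c/b$ values.)

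The paper's argument sidesteps this entirely by working directly with the enlarged symmetry group. It reruns the Cauchy--Schwarz/$L^{k+1}$ machinery of Theorem~\ref{main} with the counting function $\nu(r,\theta,z)=|\{(u,v)\in E^2: u-r\theta v=z\}|$, now carrying the extra dilation parameter $r\in\mathbb F_q^*$. The key gain appears in the Fourier estimate: in the congruence problem $\theta^{-1}\xi$ ranges only over a circle of size $\approx q$, forcing one to invoke the spherical-average Lemma~\ref{mlem} and hence the stronger hypothesis $|E|\gg q^{8/5}$; here $r\theta^{-1}\xi$ ranges over all vectors whose norm has the same quadratic character as $\|\xi\|$, a set of size $\approx q^2/2$. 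A single application of Plancherel then yields $\sum_{r,\theta,\xi\ne 0}|\hat\nu(r,\theta,\xi)|^2\le 2|E|^2$ (with a separate, easy treatment of the null lines when $q\equiv 1\bmod 4$), and after Lemma~\ref{L1} one gets $\sum_{\bar{\mathbb D}}\mu_S^2(\bar{\mathbb D})\lesssim |E|^6/q^2+q^2|E|^3$. The first term dominates once $|E|\gg q^{4/3}$, giving $|S^2_2(E)|\ge q^2/2$. In short, the dilation enlarges the Fourier-side orbit enough that the delicate spherical-average bound is no longer needed; this is exactly why the similarity threshold drops from $8/5$ to $4/3$, and it is this mechanism --- not a bootstrap from the pinned distance set --- that drives the proof.
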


\vskip.125in

We also obtain non-trivial exponents for the distribution of simplices on $d$-dimensional spheres. In principle, Theorem \ref{main} can be used directly in this case, where we replace $d$ with $d+1$ since $d$-dimensional spheres are subsets of ${\mathbb F}_q^{d+1}$. Unfortunately, in the case of $d$-dimensional simplices, Theorem \ref{main} yields the exponent $d+1-\frac{d}{d+1}>d$. Since we are considering simplices that lie on a $d$-dimensional sphere, we are looking for exponents $<d$ since the size of a $d$-dimensional sphere is $q^d(1+o(1))$. In order to do this, the geometry of the sphere must be taken into account in a careful way and we are able to accomplish this in the following result.

\begin{theorem} \label{sphere} Let $Q$ be a non-degenerate quadratic form. Let $E \subset S_Q^d=\{x \in {\mathbb F}_q^{d+1}: Q(x)=r \}$, the $d$-dimensional sphere of radius $r \neq 0$ centered at the origin in ${\mathbb F}_q^{d+1}$. Let $2 \leq k \leq d$ and suppose that $|E| \gg  q^{d - \frac{d-1}{2(k-1)}}$ when $k \geq 3$ and
$|E| \gg q^{\frac{2d+1}{3}}$ when $k=2$. Then $|T^d_k(E)|\gg  q^{{k+1} \choose 2}$. \end{theorem}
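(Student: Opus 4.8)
\emph{Proof plan.} The plan is to run the group-action machinery behind Theorem~\ref{main}, but with the full group of motions $\mathbb F_q^{d+1}\rtimes O(Q)$ replaced by its rotation subgroup $O(Q)$ acting on $S^d_Q$, and then to replace the ``flat'' restriction estimate by one that sees the curvature of the sphere.

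\emph{Step 1: reduction to a moment bound on $O(Q)$.} The group $O(Q)$ acts transitively on $S^d_Q$ with point-stabiliser an orthogonal group of a non-degenerate rank-$d$ form, which I write $O_d(\mathbb F_q)$ (its order, like that of all the groups below, is $q^{\binom{d}{2}}(1+o(1))$); similarly $O_{d-k}(\mathbb F_q)$ denotes the pointwise stabiliser of a $(k+1)$-dimensional non-degenerate subspace. If $B$ is the bilinear form of $Q$ then $\|x-y\|=2r-2B(x,y)$ on $S^d_Q$, so the congruence class of a non-degenerate $k$-simplex inscribed in $S^d_Q$ \emph{is} the Gram matrix $(B(x^i,x^j))$; by Witt's theorem two such simplices are congruent iff related by some $g\in O(Q)$, and then the set of such $g$ is a coset of $O_{d-k}(\mathbb F_q)$ (stabiliser of the generically $(k+1)$-dimensional span of the vertices, which misses $0$ since $r\neq0$). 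Writing $\mu_k(\mathbb D)$ for the number of ordered $(k+1)$-tuples in $E^{k+1}$ with distance matrix $\mathbb D$ and $\nu_E(g)=\#\{x\in E: gx\in E\}$, Cauchy--Schwarz gives $|T^d_k(E)|\ge |E|^{2(k+1)}\big/\sum_{\mathbb D}\mu_k(\mathbb D)^2$, and unwinding the multiplicities just described (degenerate simplices being lower-order for $k\le d$, cf.\ the appendix) gives
\[
\sum_{\mathbb D}\mu_k(\mathbb D)^2=\frac{1}{|O_{d-k}(\mathbb F_q)|}\sum_{g\in O(Q)}\nu_E(g)^{k+1}+(\text{lower order}).
\]
Thus it suffices to prove $\sum_{g}\nu_E(g)^{k+1}\ll |O_{d-k}(\mathbb F_q)|\,q^{-\binom{k+1}{2}}\,|E|^{2(k+1)}$.

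\emph{Step 2: Mattila-type formula and main term.} Expanding $\nu_E$ by the Fourier transform on $\mathbb F_q^{d+1}$ and resumming over $O(Q)$ (using that each $g$ is a $Q$-isometry, so the inner sum over $g$ collapses to orbit counts governed by the Gram matrices of the frequencies) rewrites $\sum_g\nu_E(g)^{k+1}$ as an expression built from sums of $\widehat E$ over the quadrics $\{Q=\text{const}\}$ --- the finite-field avatar of the Mattila integral over a sphere. The term with all frequencies zero is a main term, and a direct computation ($|O_d|^{k+1}/|O(Q)|^{k}\asymp |O_{d-k}|q^{-\binom{k+1}{2}}$) shows it is $\asymp |O_{d-k}(\mathbb F_q)|q^{-\binom{k+1}{2}}|E|^{2(k+1)}$ \emph{for every} $E$; hence the entire content lies in bounding the off-diagonal frequencies.

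\emph{Step 3: the off-diagonal estimate, and the main obstacle.} Here the geometry of the sphere enters. For $k\ge3$ I would establish the sphere analogue of the restriction/extension estimate behind Theorem~\ref{main}: the surviving frequencies are confined to, and the rotations move them along, the lower-dimensional quadrics $\{Q=\text{const}\}$, so each Cauchy--Schwarz gains roughly a square root relative to the flat count, and an iterated Cauchy--Schwarz of this shape turns the flat exponent $d-\tfrac{d-1}{k+1}$ into $d-\tfrac{d-1}{2(k-1)}$. The case $k=2$ is exceptional --- the iteration collapses --- and instead I would estimate the third moment directly via
\[
\sum_g\nu_E(g)^3\le(\text{main})+\Big(\max_{g\notin G_0}\nu_E(g)\Big)\sum_g\nu_E(g)^2+\sum_{g\in G_0}\nu_E(g)^3,
\]
where $G_0$ is the exceptional set on which $\nu_E$ is abnormally large; the second moment is the ``distances on $S^d_Q$'' count, $\sum_g\nu_E(g)^2\asymp|O_{d-1}(\mathbb F_q)|\sum_{t}\lambda_E(t)^2$ with $\lambda_E$ the distance-counting function, while $G_0$ is controlled by the (near-)symmetries of $E$; balancing these inputs yields the threshold $q^{(2d+1)/3}$. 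I expect Step~3 to be the crux: quantifying the curvature gain precisely enough to obtain $2(k-1)$ rather than the naive $k+1$ (a vertex-by-vertex peeling argument only gives $\tfrac{2d+k-1}{3}$), and, for $k=2$, the sphere-restricted incidence input controlling $\sum_g\nu_E(g)^2$ and $G_0$, which sits in the same circle of ideas as the planar estimates behind Theorem~\ref{main2} and Corollary~\ref{spcor}.
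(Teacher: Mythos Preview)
Your Step~1 is correct and matches the paper exactly: reduce to Cauchy--Schwarz and bound $\sum_{\mathbb D}\mu^2(\mathbb D)$ by $|O_{d-k}(\mathbb F_q)|^{-1}\sum_{g\in O(Q)}f(g)^{k+1}$ with $f(g)=\nu_E(g)=\sum_z E(z)E(gz)$.

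The gap is in Step~3, and it is a real one. You propose two separate mechanisms---an ``iterated Cauchy--Schwarz'' curvature gain for $k\ge3$ and an exceptional-set argument for $k=2$---neither of which you actually carry out, and neither of which is how the paper proceeds. The paper's argument is \emph{uniform in $k$} and far more concrete. It applies Lemma~\ref{L1} (Taylor with remainder) directly to $f$ on the finite space $O(Q)$, reducing $\sum_g f(g)^{k+1}$ to
\[
|O(Q)|\Bigl(\tfrac{\|f\|_1}{|O(Q)|}\Bigr)^{k+1}+\tfrac{k(k+1)}{2}\|f\|_\infty^{k-1}\sum_g\bigl(f(g)-\tfrac{\|f\|_1}{|O(Q)|}\bigr)^2,
\]
with $\|f\|_1=|O(Q)||E|^2/|S^d|$ and $\|f\|_\infty\le|E|$. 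There is no exceptional set and no iteration: the entire problem is now the \emph{variance} $\sum_g(f(g)-\text{mean})^2$, for every $k$.

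The decisive step you are missing is how this variance is computed and where the curvature enters. Expanding the square, $\sum_g f(g)^2$ counts $\{(x,y,z,w)\in E^4:\exists g,\ gx=y,\ gz=w\}$, which (after peeling off the $x=\pm z$ contributions) equals $|O_{d-1}(\mathbb F_q)|\cdot\#\{(x,y,z,w)\in E^4:\langle x,z\rangle=\langle y,w\rangle\}$. The cited dot-product bound from \cite{HIKR11} gives $\sum_t\nu(t)^2\le|E|^4/q+K$ with $K\ll|E|^2q^d$. The crucial point is that the leading term $|O_{d-1}||E|^4/q$ \emph{cancels} against the subtracted $-|E|^4|O(Q)|/|S^d|^2$, because $\tfrac{1}{q|S^{d-1}|}-\tfrac{1}{|S^d|}=O(q^{(d+1)/2-2d})$ by the sharp sphere-size asymptotics $|S^d|=q^d+O(q^{d/2})$. \emph{That cancellation is the curvature gain}; it is arithmetic, not Fourier-analytic, and it happens once, not iteratively. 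What remains are two error terms, one scaling like $|E|^4q^{(d+1)/2-d-1}$ and one like $|E|^2q^d$; an elementary parameter $\epsilon$ balances them and produces the split thresholds $d-\tfrac{d-1}{2(k-1)}$ for $k\ge3$ and $\tfrac{2d+1}{3}$ for $k=2$.

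So: your Fourier/Mattila picture in Step~2 is not wrong in spirit, but the plan in Step~3 does not identify the actual mechanism. Replace the speculative iteration and the $G_0$ splitting by (i) Lemma~\ref{L1}, (ii) the dot-product second-moment identity, and (iii) the cancellation coming from the exact sphere sizes.
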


\vskip.125in 

The case of $k = 1$ is treated in \cite{HIKR11}.

\vskip.125in

\subsection{Sharpness of results}

Fix a non-degenerate quadratic form $Q$ on $\mathbb{F}_q^d$.
Let $\alpha_{k,d}(Q)$ denote the sharp exponent for the $T^d_{k,Q}(E)$ problem in ${\mathbb F}_q^d$. More precisely, fix $k \leq d$ and let $\alpha_{k,d}(Q)$ be the infimum of numbers $s>0$ such that if $|E|\gg q^s$, then $|T^d_{k,Q}(E)|\gg q^{k+1 \choose 2}$. In the case of the standard dot product, we will
denote $\alpha_{k,d}(Q)$ by $\alpha_{k,d}$. We also say that the exponent $\alpha_{k,d}(Q)$ is {\em attainable up to the endpoint} if for every $|E|\gg q^s$, $|T^d_{k,Q}(E)|\gg q^{k+1 \choose 2}$, with a universal choice of constants implicit in the $\gg$ notation.

\begin{theorem} \label{sharpness} We have the following lower bounds on the sharp exponent for the simplex problem when $1 \le k \le d$:

\vskip.125in

i) $\alpha_{1,d} \ge \frac{d+1}{2}$ if $d \ge 3$ is odd, attainable up to the endpoint.

\vskip.125in

ii) $\alpha_{1,d} \ge \frac{d}{2}$ if $d \ge 2$ is even, unattainable up to the endpoint.

\vskip.125in

iii) $\alpha_{k,d}(Q) \ge k-1+1/k$ for any non-degenerate quadratic form $Q$.

\end{theorem}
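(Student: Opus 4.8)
The plan is to produce, for each part, an explicit construction of a large set $E$ whose simplex set $T^d_{k,Q}(E)$ (or $T^d_{1,d}(E)$) is forced to be small by a dimension/counting argument, so that the sharp exponent cannot lie below the claimed threshold. Parts (i) and (ii) concern distances ($k=1$), and here I would simply cite or reproduce the known sharpness constructions: for (i), when $d$ is odd, take $E$ to be (a large subset of) an appropriately chosen affine subspace or a sphere of zero radius — these are the configurations used in \cite{HIKR11} and \cite{IR07} — on which the number of realized distances is $O(q)$ even though $|E|$ is of order $q^{(d-1)/2}$ times something, pushing the critical exponent up to $\frac{d+1}{2}$; the "attainable up to the endpoint" claim follows because the matching upper bound from \cite{IR07} holds for \emph{all} $E$ with $|E| > 2q^{(d+1)/2}$ with universal constants. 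For (ii), the even-dimensional construction gives $|E|\sim q^{d/2}$ with $|\Delta(E)|=o(q)$, yielding $\alpha_{1,d}\ge d/2$, and the failure of endpoint attainability is the observation, also from the cited work, that at $s=d/2$ one cannot get a positive proportion of distances with uniform constants.

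The genuinely new content is part (iii): $\alpha_{k,d}(Q)\ge k-1+1/k$ for every non-degenerate $Q$. The idea is to build $E$ as a union of translates (or a product-type set) lying in a low-dimensional affine flat, so that all $k$-simplices spanned by $E$ are \emph{degenerate} — they lie in a fixed $(k-1)$-dimensional affine subspace $V$ — and hence the congruence classes they determine are governed by the much smaller group $\mathbb F_q^{k-1}\rtimes O_{k-1}$ acting on $V$, not by the full $\mathbb F_q^d\rtimes O(Q)$. First I would fix a non-degenerate $(k-1)$-dimensional subspace $V$ (possible for $q$ odd since non-degenerate subspaces of every dimension exist), take $E\subset V$ with $|E|=N$, and observe that $T^d_{k,Q}(E)$ injects into the set of congruence classes of $(k-1)$-dimensional configurations in $V$, whose total cardinality is at most $q^{\binom{k}{2}}$ — wait, we want something smaller than the generic count $q^{\binom{k+1}{2}}$. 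The point is precisely that $\binom{k+1}{2} - \binom{k}{2} = k$, so confining $E$ to a $(k-1)$-flat saves a full factor $q^k$ in the number of possible congruence classes: $|T^d_{k,Q}(E)| \le |T^{k-1}_{k,Q'}(V\text{-configs})| \ll q^{\binom{k}{2}}$, while a counting argument lets $|E|$ be as large as roughly $q^{k-1+1/k}$. To see the size bound: inside $V\cong\mathbb F_q^{k-1}$ one can take $E$ to be (a variant of) a set realizing few congruence classes among its own $k$-point subconfigurations; alternatively, and more simply, one should count pairs — the number of $(k+1)$-tuples from $E$ is $N^{k+1}$, each congruence class in a $(k-1)$-dimensional ambient space is hit by at most $\sim q^{\dim(\mathbb F_q^{k-1}\rtimes O_{k-1})} = q^{\binom{k}{2}}$-orbit-sized families... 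I would instead argue directly that since the ambient affine group has dimension $\binom{k}{2}+(k-1)$, and we need the orbit of a single $(k+1)$-tuple to be comparable to $N^{k+1}$ for the classes to be numerous, solving $\binom{k-1}{2}+(k-1) + (\text{classes}) \gtrsim (k+1)\log_q N$ with classes $=\binom{k}{2}$ forces $N \lesssim q^{k-1+1/k}$ at the threshold where the bound becomes nontrivial.

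The main obstacle I anticipate is making the counting in (iii) fully rigorous and uniform in $Q$: one must check that a non-degenerate $(k-1)$-dimensional subspace $V$ exists for \emph{every} non-degenerate $Q$ on $\mathbb F_q^d$ (true for $q$ odd by Witt's theorem / the classification in the appendix), that the restriction $Q|_V$ is still non-degenerate so the degenerate-simplex subtleties of the earlier remarks do not bite, and — most delicately — that the number of congruence classes of $k$-simplices \emph{all of whose vertices lie in a $(k-1)$-flat} is genuinely $O(q^{\binom{k}{2}})$ with a constant independent of $q$, which requires controlling the contribution of degenerate sub-simplices within $V$ as well. I would handle this last point by the same $o(\cdot)$ estimate invoked in the second Remark of the paper. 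Once the construction is in place, the inequality $\alpha_{k,d}(Q)\ge k-1+1/k$ is immediate: any $E$ of size below this threshold that is placed in $V$ has $|T^d_{k,Q}(E)| = o(q^{\binom{k+1}{2}})$, so the defining infimum for $\alpha_{k,d}(Q)$ cannot be smaller.
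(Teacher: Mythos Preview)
Your treatment of parts (i) and (ii) is acceptable in spirit (the paper also largely defers to \cite{HIKR11} and then gives an explicit lattice/product-set argument for the unattainability in even dimensions), though you should be aware that the ``unattainable up to the endpoint'' claim in (ii) is not a soft fact from the literature but requires an actual construction: the paper builds a $(k+1)$-dimensional subspace $L$ on which the distance form degenerates to $x(x+y)$ and then uses arithmetic growth estimates (size of product sets of short intervals in $\mathbb{Z}$) to beat any fixed constant. Your sentence ``the failure of endpoint attainability is the observation\dots'' does not capture this.

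For part (iii) there is a genuine gap. Your proposed construction confines $E$ to a $(k-1)$-dimensional affine flat $V$. But then $|E|\le |V|=q^{k-1}$, which is \emph{strictly below} the target size $q^{k-1+1/k}$; you can therefore never produce a set at or near the claimed threshold, and the sentence ``a counting argument lets $|E|$ be as large as roughly $q^{k-1+1/k}$'' is simply false. The orbit-dimension heuristic you sketch afterward does not rescue this, because the constraint $|E|\le q^{k-1}$ is a hard cardinality bound, not something a counting argument can relax.

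The paper's construction is different in two essential ways. First, it reduces to the case $k=d$ by restricting to a \emph{$k$-dimensional} (not $(k-1)$-dimensional) non-degenerate subspace; this is the correct reduction and preserves the possibility of $|E|$ exceeding $q^{k-1}$. Second, and this is the main idea you are missing, inside $\mathbb{F}_q^d$ (now with $k=d$) it takes $E$ to be a \emph{slab}: full in $d-1$ coordinate directions and of thickness about $q^{1/d-\epsilon}$ in the remaining direction, so that $|E|\approx q^{d-1+1/d-\epsilon}$. The bound on congruence classes then comes from a multiplicity argument: every congruence class that appears does so at least $|E|\cdot|O_{d-1}(\mathbb{F}_q)|\approx q^{d-1+1/d-\epsilon}\cdot q^{\binom{d-1}{2}}$ times inside a slightly enlarged family $G\supset E^{d+1}$, because one can translate within the slab and rotate around the thin axis while staying in $G$. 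Dividing $|G|$ by this multiplicity gives at most $q^{\binom{d+1}{2}-d\epsilon}$ classes. Your proposal contains neither the slab construction nor this orbit-multiplicity count, and without them the argument cannot reach the exponent $k-1+1/k$.
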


\vskip.125in
\begin{remark} We have included (ii) in Theorem \ref{sharpness} for the following purpose. It appears that there is a genuine geometric reason why (i) cannot hold in even $d\geq 4$. We have not been able to identify it with any rigor. However, we can provide counterexamples, showing the presence of at least an extra logarithmic term for the minimum number of distinct  distances in {\em all} even dimensions, in contrast to the Euclidean case, where an equivalent of a logarithmic term only appears in the two-dimensional case. \end{remark}

\vskip.25in 

\section{Proof of the main result}

We'll begin by providing a bound on the $\ell^n$ norm of a real-valued function over a finite space. This essentially allows us to reduce $k$-simplices to the case of $2$-simplices.

\begin{lemma} \label{L1}
For any finite space $F$, any function $f: F \rightarrow \bb{R_{\geq 0}}$, and any $n \geq 2$ we have
$$\sum_{z \in F} f^n(z) \leq |F|\left( \frac{\|f\|_1}{|F|} \right)^n + \frac{n(n-1)}{2}\|f\|_\infty^{n-2}\sum_{z \in F} \left(f(z) - \frac{\|f\|_1}{|F|} \right)^2,$$
where $\|f\|_1=\sum_{z \in F} |f(z)|,$ and $\|f\|_\infty=\max_{z \in F} f(z)$.
\end{lemma}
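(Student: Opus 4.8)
The plan is to expand $f$ about its mean value and invoke Taylor's theorem with the Lagrange form of the remainder. Set $a=\|f\|_1/|F|$, the average of $f$ over $F$, and write $f(z)=a+g(z)$, so that $\sum_{z\in F}g(z)=0$ by construction. The point of this decomposition is that, in any second-order expansion of $f^n$, the linear term will vanish upon summation over $z$, leaving only the constant term — which produces $|F|a^n$ — and the quadratic remainder — which produces the variance-type sum on the right-hand side.

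The key step is to control the function $\phi(t)=(a+t)^n$ on the segment joining $0$ to $g(z)$. Since $0\le f(z)\le\|f\|_\infty$ and $a=\|f\|_1/|F|\in[0,\|f\|_\infty]$, the quantity $a+t$ stays in $[0,\|f\|_\infty]$ for every $t$ between $0$ and $g(z)$; in particular $\phi$ is twice differentiable there, with $\phi''(t)=n(n-1)(a+t)^{n-2}$, where we use $n\ge 2$ so that the exponent $n-2$ is nonnegative. Taylor's theorem then gives, for each $z$, some $\xi$ between $0$ and $g(z)$ with
\[
f^n(z)=(a+g(z))^n=a^n+na^{n-1}g(z)+\tfrac{n(n-1)}{2}(a+\xi)^{n-2}g(z)^2.
\]
Since $a+\xi$ lies between $a$ and $f(z)$, hence in $[0,\|f\|_\infty]$, and $n-2\ge 0$, we bound $(a+\xi)^{n-2}\le\|f\|_\infty^{n-2}$, obtaining
\[
f^n(z)\le a^n+na^{n-1}g(z)+\tfrac{n(n-1)}{2}\|f\|_\infty^{n-2}g(z)^2.
\]

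Summing this inequality over $z\in F$ then completes the argument: the first term contributes $|F|a^n=|F|(\|f\|_1/|F|)^n$, the middle term contributes $na^{n-1}\sum_{z\in F}g(z)=0$, and the last term contributes $\tfrac{n(n-1)}{2}\|f\|_\infty^{n-2}\sum_{z\in F}(f(z)-\|f\|_1/|F|)^2$, which is exactly the claimed bound. There is essentially no obstacle here; the only points requiring care are that the averaging decomposition annihilates the linear term and that $n\ge 2$ is what guarantees both the twice-differentiability of $\phi$ on a closed interval that may contain a zero of $a+t$ and the monotonicity used to replace the intermediate value $a+\xi$ by $\|f\|_\infty$. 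For $n=2$ the expansion is exact with remainder coefficient $1$ and the stated inequality is in fact an identity. One could instead phrase the argument using convexity of $t\mapsto t^n$ on $[0,\infty)$ together with a second-order mean value estimate, but the Lagrange-remainder version above is the most direct.
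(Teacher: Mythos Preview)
Your proof is correct and follows essentially the same approach as the paper: both apply Taylor's theorem with Lagrange remainder to $t\mapsto t^n$ about the mean value $y_0=\|f\|_1/|F|$, bound the intermediate point by $\|f\|_\infty$ using $n\ge 2$, and sum so that the linear term drops out. Your write-up is somewhat more careful in justifying why the intermediate point lies in $[0,\|f\|_\infty]$ and why $n\ge 2$ is needed, but the argument is the same.
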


\begin{proof}

By Taylor's formula with remainder, we have
$$y^n = y_0^n + ny_0^{n-1} (y-y_0) + \frac{n(n-1)}{2} c^{n-2} (y-y_0)^2$$
where $c$ lies between $y$ and $y_0$.
Plugging in $y=f(z)$ we get:
$$
f(z)^n = y_0^n + ny_0^{n-1}(f(z)-y_0) + \frac{n(n-1)}{2} c^{n-2} (f(z)-y_0)^2
$$
where $c$ lies between $y_0$ and $f(z)$. If $y_0$ is chosen to be in $[0, \|f\|_{\infty}]$, then $c^{n-2} \leq \|f\|_{\infty}^{n-2}$, so we obtain
$$
f(z)^n \leq y_0^n + ny_0^{n-1}(f(z)-y_0) + \frac{n(n-1)}{2} \|f\|_{\infty}^{n-2} (f(z)-y_0)^2
$$

%By the binomial theorem,  we have

%$$\begin{aligned} f^n(z) &= \sum_{i=0}^n {n \choose i}A^{n-i} (f(z) - A)^i\\ & = (1-n)A^n + %nA^{n-1}f(z) + \sum_{i=2}^n  {n \choose i}A^{n-i} (f(z) - A)^i
%\\& = (1-n)A^n + nA^{n-1}f(z) +  (f(z) - A)^2\sum_{i=2}^n {n \choose i}A^{n-i} (f(z) - A)^{i-2}.%\end{aligned}$$

%Now notice that both $A$ and $f(z)-A$ are less than $\|f\|_\infty$, so we replace them to get

%$$f^n(z) \leq (1-n)A^n + nA^{n-1}f(z) +  (f(z) - A)^2\sum_{i=2}^n {n \choose i} \|f\|%_\infty^{n-2}$$

%Now we recall that $\sum_{i=0}^n {n \choose i} = 2^n$, so

%$$f^n(z) \leq A^n + nA^{n-1}(f(z)-A) +  2^n\|f\|_\infty^{n-2}(f(z) - A)^2.$$

Plugging in $\frac{\|f\|_1}{|F|}$ for $y_0$ and summing over all elements of $F$ on both sides gives the result.

\end{proof}

Let $V$ be the $\mathbb{F}_q$-vector space of $(k+1)\times(k+1)$ symmetric matrices
which will function as the space of possible ordered $k$-simplex distances.
Let $E \subset \mathbb{F}_q^d$ and define $\mu: V \rightarrow \mathbb{Z}$ by the relation

$$\mu(\mathbb{D}) = \# \{ (x_1,. . .,x_{k+1}) \in E^k : \|x_i-x_j\| = d_{i,j}, 1 \leq i <j \leq k+1 \}.$$

\vskip.125in

Notice that the number of distinct congruence classes represented by $(k+1)$-tuples in $E$ is then

$$ T^d_k(E) =\sum_{\mathbb{D} \in supp(\mu) } 1$$

A warning to the reader: in reality we should parametrize the above sum by the abstract set of congruence classes of $k$-simplices in $\mathbb{F}_q^d$. However we choose in the
description to use the ordered distances $\mathbb{D}$ instead. For non-degenerate simplices there is no difference, and in fact during our proofs
we use the group congruences exclusively and not the distances. This ``simpler" description is made only for brevity and effects results in no significant way.

Another abuse of notation we make deals with the size of spheres and notation for stabilizers. In general there are three distinct sizes of spheres in
$\mathbb{F}_q^d$, depending on whether the radius is zero, a nonzero square, or a non-square (see the appendix for details). This means that there are
three types of stabilizers of nonzero elements by $O_d(\mathbb{F}_q)$ or $O(Q)$ in general. By abuse of notation, we will denote them all by $O_{d-1}(\mathbb{F}_q)$ (the stabilizer of an element of norm 1)
though in reality these stabilizers can have 3 different sizes. However since the sizes of spheres (in nearly all cases) are all of the same order of magnitude, so are the sizes of these stabilizers and this abuse of notation is reasonable and avoids the clutter of overzealous bookkeeping. It does mean, however, that each time we write
$|O_{d-1}(\mathbb{F}_q)|$ as the size of some stabilizer group, we implicitly mean to include a $(1+o(1))$ factor. In all of our arguments but the proof of Theorem~\ref{sphere} the only effect of these subtleties is a change in the final implicit constants in the statement of the theorem over those obtained in the proof.

\vskip.125in

\subsection{Proof of Theorem \ref{main}}
Fix a non-degenerate quadratic form $Q$ on $\mathbb{F}_q^d$ and let $1 \leq k \leq d$. All norms, distances, congruences and inner products in this section will be with respect to $Q$. Without loss of generality assume $\vec{0}\not\in E$.

\vskip.125in 

By Cauchy-Schwarz, we have

\begin{equation}\label{CS}T^d_{k,Q}(E) \geq \frac{\left( \sum_{\mathbb{D}} \mu(\mathbb{D}) \right) ^2}{\sum_{\mathbb{D}} \mu^2(\mathbb{D})}.\end{equation}

Observe that the numerator simply counts the total number of $k$-simplices in our set $E$, and then squares it. Since each simplex is determined by a $(k+1)$-tuple of points, our numerator is $|E|^{2k+2}$.

The denominator, on the other hand, counts the number of pairs of congruent $k$-simplices. It follows that

$$ \sum_{\mathbb{D}} \mu^2(\mathbb{D}) =$$

$$ \# \{(x_1, . . ., x_{k+1},y_1, . . ., y_{k+1}) \in E^{2k+2} :\; \exists \ \theta \in O(Q), z \in \mathbb{F}_q^d:\; \theta(x_i) + z = y_i, \,i=1,\ldots,k+1\}.$$

\vskip.125in

Let $\nu(\theta,z) = \# \{(u,v) \in E \times E: u-\theta (v) = z \}$. The quantity $\nu(\theta,z)$ equals the number of pairs of points $(u,v)\in E\times E$ such that the ``rigid motion'' $\rho (z,\theta)$, which is a composition of $\theta \in O(Q)$ and a translation by $z$, acts as $u = \rho(z,\theta)\, v$. Since the variables $\theta,z$ are treated rather differently, we will also be using the notation $\nu(\theta,z)=\nu_\theta(z)$

Then $\nu^{k+1}_\theta(z)$ equals the number of $(k+1)$-tuples of such pairs of points $(u,v)$. This is equal to the number of pairs of congruent $k$-simplices $(x_1,\dots,x_{k+1}), (y_1,\dots,y_{k+1})$, with vertices in $E$, mapped into one another by the transformation $\rho (z,\theta)$. This is because such $k$-simplices determine and are determined by $(k+1)$ pairs of points $(x_i,y_i), 1 \leq i \leq k+1$ with $\rho(z,\theta)x_i=y_i$.

The transformation $\rho (z,\theta)$ taking a $(k+1)$-tuple of elements of $E$ to another is defined up to the stabilizer of the first $(k+1)$-tuple. Thus this pair of congruent
$k$-simplices will contribute $+1$ toward the count in $\nu^{k+1}_\theta(z)$ for a number
of $(z,\theta)$ pairs equal to the size of this stabilizer.

Congruent $k$-simplices will have conjugate stabilizers, thus we may define
$s(\mathbb{D})$ to be the common stabilizer size of $k$-simplices in the congruence class
$\mathbb{D}$. From the above arguments it follows that,

\begin{equation}
\sum_{\mathbb{D}} s(\mathbb{D}) \mu^2(\mathbb{D}) = \sum_{\theta,z} \nu^{k+1}_{\theta}(z)
\end{equation}

Therefore, we can write

\begin{equation}\sum_{\mathbb{D}} \mu^2(\mathbb{D}) \leq \frac{1}{|O_{d-k}(\mathbb F_q)|} \sum_{\theta,z} \nu_\theta^{k+1}(z),\label{jack}\end{equation}
where $|O_{d-k}(\mathbb F_q)|$ is the {\bf minimum} size of the stabilizer of a $(k+1)$-tuple, that is, of a $k$-dimensional subspace of $\mathbb F_q^d$. (The more degenerate the $(k+1)$-tuple, the bigger the stabilizer. Here we use $|O_{d-k}(\mathbb{F}_q)|$ as all orthogonal groups of the same dimension
have comparable sizes, independent of $Q$. See the appendix for details. )

\vskip.125in

Applying lemma \ref{L1}, we have, for each $\theta$:

\begin{equation}\label{binder}\begin{aligned} \sum_{z} \nu_\theta^{k+1}(z) & \leq q^d\left( \frac{\|\nu_\theta\|_1}{q^d} \right)^{k+1} + \frac{(k+1)k}{2}\|\nu_\theta\|_\infty^{k-1}\sum_{z} \left(\nu(\theta,z) - \frac{\|\nu_\theta\|_1}{q^d} \right)^2\\
& = q^d\left( \frac{\|\nu_\theta\|_1}{q^d} \right)^{k+1} + \frac{(k+1)k}{2}q^d\|\nu_\theta\|_\infty^{k-1} \sum_{\xi \neq 0} |\hat{\nu}_\theta(\xi)|^2\\
&\leq
q^{-kd}|E|^{2k+2} + \frac{(k+1)k}{2}q^d|E|^{k-1} \sum_{\xi \neq 0} |\hat{\nu}_\theta(\xi)|^2,
\end{aligned}\end{equation}
using the fact that
$\displaystyle \sum_{ z}  \nu(\theta,z) = |E|^2$ and $\nu(\theta,z) \leq |E|$.

Also observe that $$\begin{aligned}\nu_\theta (z) = \sum_v E(v)E(z + \theta v) & = \sum_{v,\alpha} E(v)\hat{E}(\alpha)\chi(\alpha \cdot (z + \theta v))  \\ & = q^d\sum_{\alpha} \hat{E}(\alpha)\hat{E}(-\theta^{T} (\alpha)) \chi(z \cdot \alpha ).\end{aligned}$$

It follows that $$\hat{\nu}_\theta(\xi)  = q^d \hat{E}(-\xi)\hat{E}(\theta^{T} (\xi)),$$

and thus

\begin{equation}\sum_{\theta, \xi \neq 0} |\hat{\nu}_\theta(\xi)|^2 = q^{2d}\sum_{\theta, \xi \neq 0}|\hat{E}(\xi)|^2|\hat{E}(\theta^{T}(\xi))|^2.\label{fform}\end{equation}

Furthermore, notice that given $\xi\neq 0$, the action $\theta^{T}(\xi)$ is defined up to the stabilizer of $\xi$, which has size at most $|O_{d-1}(\mathbb F_q)|$ when $d \geq 2$ (It is actually of equal order except in the case $d=2$ when circles of radius $0$ arise). We conclude that

\begin{equation}\sum_{\theta, \xi \neq 0} |\hat{\nu}_\theta(\xi)|^2 \leq q^{2d}|O_{d-1}(\mathbb F_q)|\sum_{ \xi \neq 0}|\hat{E}(\xi)|^2\sum_{\substack{\eta\neq 0, \\ \| \eta \| = \|\xi\|}}|\hat{E}(\eta)|^2.\label{matt}\end{equation}

Extending the summation in $\eta$ over all $\eta\neq 0$ and using Plancherel twice, we get

$$\sum_{\theta, \xi \neq 0} |\hat{\nu}_\theta(\xi)|^2 \leq |O_{d-1}(\mathbb F_q)||E|^2.$$

We plug this back into (\ref{binder}) and get

$$ \begin{aligned} \sum_{\mathbb{D}} \mu^2(\mathbb{D})
& \leq  \frac{1}{|O_{d-k}(\mathbb F_q)|} \left(   |O(Q)|\frac{|E|^{2k+2}}{q^{kd}} + \frac{(k+1)k}{2}q^d|E|^{k+1}|O_{d-1}(\mathbb F_q)|\right)
\\ &= \frac{C_1|E|^{2k+2} +  C_2|E|^{k+1}q^{kd+1}}{q^{{k+1} \choose 2}},\end{aligned}$$
using $|O_{n}(\mathbb F_q)| = C_n q^{\frac{n(n-1)}{2}}=|O(Q)|(1+o(1))$. The constants $C_1$ and $C_2$ depend only on $d$ and $k$ and
the quadratic form $Q$.

This, in view of (\ref{CS}) gives us

$$T^d_{k,Q}(E) \geq \frac{q^{{k+1} \choose 2}|E|^{2k+2}}{ C_1|E|^{2k+2} + C_2|E|^{k+1}q^{kd+1}}.$$

Suppose the first term in the denominator exceeds the second term. This happens exactly
when

$$|E|^{k+1} > \frac{C_2}{C_1} q^{kd+1}$$

and in this case it follows that

$$T^d_{k,Q}(E) \geq \frac{q^{{k+1} \choose 2}}{ 2C_1}.$$

This, after redefining the constants to fit the statement of of Theorem \ref{main}, completes its proof.

\vskip.125in

\subsection{Proof of Theorem \ref{main2}}

%If $d=2$ we can consider $\theta\in SO_2(\mathbb F_q)$, that is orthogonal matrices with determinant $1$.
We will refine the above proof of Theorem \ref{main} in the case $d=k=2$.
Using $d=k=2$ and $|O_2(\mathbb{F}_q)|=q$ in (\ref{binder}), it follows that

\begin{equation}\label{2dbd}\sum_{\theta, z} \nu_\theta^{3}(z) \leq q^{-3}|E|^6 + 3q^2|E|\sum_{\theta, \xi \neq 0} |\hat{\nu}_\theta(\xi)|^2.\end{equation}

We shall first address the technically slightly-more-straightforward case $q \equiv 3\;\rm{mod}\;4.$ If this is the case, the only $\xi\in \mathbb F_q^2$ with $\|\xi\|=0$ is the origin. Hence by (\ref{matt}):

\begin{equation}\label{secmom}\begin{aligned} \sum_{\theta, \xi \neq 0} |\hat{\nu}_\theta(\xi)|^2 &= q^{4}\sum_{t \in \mathbb{F}_q} \sum_{\substack{\xi, \eta \neq 0 \\ \| \xi \| = \|\eta\| = t}} |\hat{E}(\xi)|^2|\hat{E}(\eta)|^2\\
&= q^4\sum_{t \in \mathbb{F}_q^*} \left(\sum_{ \| \xi \| = t} |\hat{E}(\xi)|^2 \right)^2 = q^4\sum_{t \in \mathbb{F}_q^*}\sigma^2_E(t) =q^4 M_E(q),\end{aligned}\end{equation}

in the notation of the following lemma, proven  in \cite{CEHIK12} (Lemma 4.4).

\begin{lemma}\label{mlem}
Let $E \subset \mathbb{F}_q^2$. Define
$$ \begin{array}{ccc}\sigma_E(t) & = &\sum_{\|\xi\| = t} |\hat{E}(\xi)|^2, \\ \hfill \\ M_E(q) &=& \sum_{t \in \mathbb{F}_q^*} \sigma_E^2(t).\end{array}$$
\indent One has the estimates \begin{equation}\label{spav}\forall t\neq 0,\;\sigma_E(t)\leq \frac{\sqrt{3}|E|^{\frac{3}{2}}}{q^3},\qquad\displaystyle M_E(q) \leq \frac{\sqrt3|E|^{5/2}}{q^5}.\end{equation}
\end{lemma}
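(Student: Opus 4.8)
The plan is to establish the two estimates in Lemma \ref{mlem} by a counting argument on incidences between the set $E$ (viewed through its Fourier transform) and circles in $\mathbb{F}_q^2$. First I would translate everything into an incidence count: by Plancherel/orthogonality, $q^{2d}\sigma_E(t)$ is essentially $\sum_{\|\xi\|=t}|\widehat{E}(\xi)|^2$, and expanding $|\widehat{E}(\xi)|^2=\sum_{x,y\in E}\chi(\xi\cdot(x-y))$ and summing over the circle $\{\|\xi\|=t\}$ produces a weighted count involving $\widehat{S_t}(x-y)$, where $S_t$ is the sphere of radius $t$. Thus $\sigma_E(t)$ is controlled by $|E|\cdot|E|/q^3$ (the "main term") plus an error governed by the Fourier decay of spheres in $\mathbb{F}_q^2$, namely $|\widehat{S_t}(\xi)|\ll q^{-1}\cdot q^{1/2}=q^{-1/2}$ for $\xi\ne 0$ (Gauss/Kloosterman sum bounds). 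This already gives the pointwise bound $\sigma_E(t)\ll |E|^{3/2}q^{-3}$ once one argues that the relevant quadratic-form exponential sums over $E\times E$ are bounded by $|E|^{3/2}$ in absolute value using the square-root cancellation for the sphere combined with Cauchy--Schwarz.

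Next, for $M_E(q)=\sum_{t\ne 0}\sigma_E^2(t)$, I would expand the square and rewrite $q^{5}M_E(q)$ (up to constants) as the number of solutions to the system $\|\xi\|=\|\eta\|\ne 0$ weighted by $|\widehat{E}(\xi)|^2|\widehat{E}(\eta)|^2$; equivalently, after unfolding the Fourier transforms, this counts quadruples $(x,y,z,w)\in E^4$ together with pairs $(\xi,\eta)$ on a common circle, with a character weight $\chi(\xi\cdot(x-y)+\eta\cdot(z-w))$. Summing the $\xi,\eta$ variables first invokes the two-variable analogue of the sphere bound: $\sum_{\|\xi\|=\|\eta\|}\chi(\xi\cdot u+\eta\cdot v)$ is, for $u,v$ not both zero, of size $O(q^{5/2})$ rather than the trivial $O(q^3)$, which is the decisive square-root gain. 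Feeding this back and separating the diagonal contribution $u=v=0$ (which reproduces a term of size $|E|^4\cdot q^{-5}\cdot q^{1/2}$ after normalization, i.e. $\ll|E|^{5/2}q^{-5}$ once $|E|\le q^2$) from the off-diagonal contribution (bounded by $q^{5/2}\cdot q^{-10}\cdot|E|^4$, which is subdominant in the same range) yields $M_E(q)\ll |E|^{5/2}q^{-5}$.

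The main obstacle, and the part I would spend the most care on, is the uniform square-root-cancellation estimate for the double exponential sum $\sum_{\|\xi\|=\|\eta\|=t}\chi(\xi\cdot u+\eta\cdot v)$ valid for \emph{all} $q$ odd, including $q\equiv 1\bmod 4$ where the isotropic circle $\|\xi\|=0$ is a genuine (degenerate) line-pair rather than just the origin. In that case the bookkeeping of the $t=0$ stratum has to be done separately, since the circle of radius zero has size $\sim 2q$ instead of $\sim q$ and is not a "generic" conic; one must check that restricting to $t\in\mathbb{F}_q^*$ in the definition of $M_E(q)$ removes exactly this pathology, so that all remaining circles are nondegenerate conics of size $q+O(1)$ with the standard Gauss-sum-level Fourier bound. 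Handling this carefully — and confirming that the error terms stay below the claimed thresholds precisely when $|E|$ does not exceed the trivial bound $q^2$ — is the technical heart; the rest is expansion, Plancherel, and a triangle-inequality split into main and error terms. Since this lemma is quoted from \cite{CEHIK12}, I would in the write-up either cite it directly or, if reproving, organize it exactly along these lines.
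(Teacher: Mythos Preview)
The paper does not prove this lemma; it merely cites it as Lemma~4.4 of \cite{CEHIK12}. So the comparison is between your sketch and the argument in \cite{CEHIK12}, about which the present paper gives one crucial hint (in the proof of Corollary~\ref{spcor}): the proof uses only that $|S_t|=q(1+o(1))$ and that \emph{every element of the sumset $S_t+S_t$ has multiplicity at most $2$}.

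There is a genuine gap in your plan for the pointwise bound on $\sigma_E(t)$. Expanding $|\hat E(\xi)|^2$ and summing over the circle gives
\[
\sigma_E(t)=q^{-2}\sum_{x,y\in E}\hat S_t(x-y),
\]
and the Gauss/Kloosterman bound $|\hat S_t(m)|\le 2q^{-3/2}$ for $m\ne 0$ yields only
\[
\sigma_E(t)\ \ll\ \frac{|E|}{q^{3}}\ +\ \frac{|E|^{2}}{q^{7/2}}.
\]
The second term dominates as soon as $|E|>q$, and then it is \emph{strictly larger} than the claimed $|E|^{3/2}q^{-3}$. No amount of Cauchy--Schwarz on top of the pointwise decay $|\hat S_t|\ll q^{-3/2}$ recovers the exponent $3/2$ in $|E|$; what is actually needed is the $L^{4/3}\to L^2$ restriction estimate for the circle (equivalently, the $L^2\to L^4$ extension estimate), whose proof rests on the additive--energy fact that $S_t+S_t$ has bounded representation multiplicity. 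That is the missing ingredient, and it is exactly the one the paper flags.

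Your direct approach to $M_E(q)$ has the same defect. Bounding the bilinear sum $\sum_{\|\xi\|=\|\eta\|=t,\,t\ne 0}\chi(\xi\cdot u+\eta\cdot v)$ by $O(q^{5/2})$ and then applying the triangle inequality over $(u,v)\in (E-E)^2$ gives a term of size $|E|^4 q^{-11/2}$, which is below $|E|^{5/2}q^{-5}$ only when $|E|\le q^{1/3}$; your claimed reduction of the diagonal term to $|E|^{5/2}q^{-5}$ is also arithmetically off. In \cite{CEHIK12} the second estimate is obtained essentially for free from the first via
\[
M_E(q)=\sum_{t\ne 0}\sigma_E(t)^2\ \le\ \Big(\max_{t\ne 0}\sigma_E(t)\Big)\sum_{t}\sigma_E(t)
\ \le\ \frac{\sqrt{3}\,|E|^{3/2}}{q^{3}}\cdot\frac{|E|}{q^{2}}
\ =\ \frac{\sqrt{3}\,|E|^{5/2}}{q^{5}},
\]
using Plancherel for the second factor. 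So the whole lemma reduces to the pointwise bound, and that requires the restriction/sumset input rather than sphere Fourier decay alone. Your final remark---that one may simply cite \cite{CEHIK12}---is of course correct and is precisely what the paper does.
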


Applying Lemma \ref{mlem} yields immediately

\begin{equation}\label{normal}\sum_{\theta, \xi \neq 0} |\hat{\nu}_\theta(\xi)|^2 \leq  \frac{\sqrt3 |E|^{5/2}}{q}.\end{equation}

Putting this together with (\ref{2dbd}) and (\ref{CS}), we have

\begin{equation}T^2_2(E) \geq \frac{|E|^6}{(q^{-3}|E|^6 + 3\sqrt3q|E|^{7/2})}.\label{t2}\end{equation}

Hence, $T^2_2(E)\geq \frac{q^3}{2}$,  whenever $|E| \geq (3\sqrt{3})^{\frac{2}{5}} q^{\frac{8}{5}}.$

\medskip
Let us now deal with the case $q \equiv 1\;\rm{mod}\;4$, where the origin is not the only $\xi\in \mathbb F_q^2$ with $\|\xi\|=0$. Let $\iota^2=-1$, and consider the vectors $n_\pm = (1,\pm \iota)\in \mathbb F_q^2$,
respectively, in the standard basis. These are null vectors, that is $\|n_\pm\|=0$, and they span one-dimensional subspaces $L_{\pm}$, respectively. Observe that the Fourier transform of
each of these subspaces is supported on the subspace itself. Indeed, let $x=(x_+,x_-)$ be coordinates of a vector in $\mathbb F_q^2$ with respect to the basis $\{n_+,n_-\}$, $\xi=(\xi_+,\xi_-)$ being the  coordinates in the dual space, with respect to the same basis. Then the characteristic function of $L_+$ is  given by $L_+(x_+,x_-) = \delta(x_-),$ that is $1$ if $x_-=0$ and $0$ otherwise. For the Fourier transform of $L_+$ we then have
\begin{equation}\label{nsft}\hat{L}_{+}(\xi_+,\xi_-) = \frac{1}{q^2}\sum_{x_+} \chi(-(\xi_+n_+ + \xi_- n_-)\cdot x_+ n_+) = \frac{1}{q} \delta(\xi_-).\end{equation}

The technical problem is that the estimate (\ref{spav}) for the quantity $\sigma_E(t)$ does not apply to the case $t=0$. Indeed, let $E$ be a union of $r$ parallel translates of $L_+$. Denote $S_0=L_+\cup L_-$, the ``null circle''.

Then \begin{equation}\label{clct}\begin{aligned}\sum_{\xi \in L_+} |\hat{E}(\xi)|^2 = \sum_{\xi} |\hat{E}(\xi)|^2 L_+(\xi)  &= \frac{1}{q^2}\sum_{x} \left(\frac{1}{q^2}\sum_y E(y)E(x+y)\right)(qL_+(x))\\
& = \frac{1}{q^3}\#\{(u,v)\in E\times E: u-v\in L_+\}=\frac{rq}{q^2}=\frac{|E|}{q^2},  \end{aligned}\end{equation}
which is worse than the first estimate in (\ref{spav}) for $|E|\ll q^2$.

To rule out this example we proceed as follows. Given $E$, we call the $x_\pm$ coordinate ``rich" (resp.``poor"), if there are at least (resp. fewer than) $2\sqrt{|E|}$ points of $E$ with this coordinate. We will also we call the $x_\pm$ coordinate ``wealthy" (resp. ``impoverished") if it contains at least (resp. fewer than) $\sqrt{2|E|}$ points of $E$. Suppose that there are $m$ wealthy $x_+$ coordinates and $n$ wealthy $x_-$ coordinates. The number of points that are wealthy in both coordinates must thus be less than $mn$. Notice that $m(\sqrt{2|E|})$ and $n(\sqrt{2|E|})$ must both be $\leq |E|$, and thus $mn \leq \frac{|E|}{2}$. We will now discard any points of $E$ for which both coordinates are wealthy, which eliminates at most half of the points of $E$. Call this new set $E'$. As a result, for any $(x_+,x_-) \in E'$, either $x_+$ or $x_-$ is impoverished with respect to the original set $E$. It follows that, for any $(x_+,x_-) \in E'$, either $x_+$ or $x_-$ is poor with respect to the new set $E'$, since $\sqrt{2|E|} = 2\sqrt{\frac{|E|}{2}} \leq 2\sqrt{|E'|}$. For simplicity, we will just refer to the new set $E'$ as $E$.

Now two options are left: a) both coordinates are poor, and b) one coordinate is rich and the other is poor.

\medskip
In the case a) we repeat the calculation (\ref{clct}) which yields:
\begin{equation}\begin{aligned} \sum_{\xi \in L_+\cup L_-} |\hat{E}(\xi)|^2 & \leq  \frac{1}{q^3}\#\{(u,v)\in E \times E: u-v\in L_+\cup L_-\}\\& \leq \frac{2}{q^3} (2\sqrt{|E|})^2 \sqrt{|E|} \;\leq \; \frac{8|E|^{\frac{3}{2}}}{q^3}.\end{aligned}\label{nullcount}\end{equation}
Thus the estimate (\ref{normal}) which worked in the case $q \equiv 3\mbox{ mod }4$ is modified as follows:
$$\sum_{\theta, \xi \neq 0} |\hat{\nu}_\theta(\xi)|^2 \leq  \frac{\sqrt3 |E|^{5/2}}{q} +\frac{ 64|E|^3}{q^2}.$$
Since we can always assume that $|E|<cq^{2}$ for a small enough $c$, the first term in this estimate dominates, and the same final conclusion as in the case $q \equiv 3 \mbox{ mod 4 }$ can be made.

\medskip In the case b), suppose that the $x_+$ coordinate is rich and $x_-$ is poor. Then partition $E = E_1\cup E_2$, where the sets $E_1$ and $E_2$ have approximately equal size and their projections on the $n_+$ direction are disjoint. (This is possible as long as $|E|>Cq$ for a large enough $C$.) We then consider a variant of the procedure, set up throughout the proof of Theorem \ref{main}, pertaining to the case $k=d=2$. In this situation, we only consider triangles which have at least one vertex in $E_1$ and at least one in $E_2$. This pruning resolves the technical problem of ``null'' distances in the case $q \equiv 1\mbox{ mod }4$. Indeed, we will now have the same estimate as (\ref{nullcount}) for the quantity
$$\frac{1}{q^3} \#\{(u,v)\in E_1\times E_2: u-v\in L_+\cup L_-\}.$$

While much of this procedure simply repeats what has been done so far, it also calls for some additional notation, so we relegate the specific details to the forthcoming subsection.

\medskip
This completes the proof of Theorem \ref{main2} \qed

\medskip
\subsubsection{Proof of Theorem \ref{main2}, case b)}
We repeat the formalism of Theorem \ref{main} in the case $d=2$ for the case of the sets $E_1,$ $E_2$.

For $i=1,2$, set $\nu_i(\theta,z) = \# \{(u,v) \in E_i \times E_i: u-\theta (v) = z \}$. The quantity $\nu_i(\theta,z)$ equals the number of pairs of points $(u,v)\in E_i\times E_i$ such that the ``rigid motion'' $\rho (z,\theta)$, which is a composition of $\theta \in SO_2(\mathbb F_q)$ and a translation by $z$, acts as $u = \rho(z,\theta)\, v$. Hence $\displaystyle \sum_{ z}  \nu_i(\theta,z) = |E_i|^2$ and $\nu_i(\theta,z) \leq |E_i|$.

Then $\nu_1\nu_2(\theta,z)$ equals the number of  pairs of congruent segments, with one vertex in $E_1$ and the other in $E_2$, so that one gets mapped to the other by the transformation $\rho(z,\theta)$.
The quantity $\nu_1\nu_2(\nu_1+\nu_2)(\theta,z)$ gives an upper bound on the number of pairs of congruent triangles, mapped into one another by the transformation $\rho(z,\theta),$ such that the three vertices of the triangle do not lie in $E_1$ or $E_2$ alone.

Hence, we need to estimate the quantity
\begin{equation}\label{oke}\begin{aligned}
\sum_{\theta,z} \nu_1\nu_2(\theta,z) & \leq \frac{q+1}{q^{2}} |E_1|^{2}|E_2|^{2} + 4q^2 \sum_{\xi\neq 0, \theta }\hat\nu_1(\theta, \xi)\overline{\hat\nu_2(\theta,\xi)} \\
& = \frac{q+1}{q^{2}} |E_1|^{2}|E_2|^{2} + 4q^6 \sum_{\xi\neq 0, \theta } \hat{E_1}(\xi)\overline{\hat{E_2}(\xi)}\overline{\hat{E_1}(\theta\xi)}\hat{E_2}(\theta\xi)\\
& = \frac{q+1}{q^{2}} |E_1|^{2}|E_2|^{2} + 4q^6 \sum_{t\in \mathbb F_q} \sigma_{E_1,E_2}(t)^2.\end{aligned}\end{equation}

where $S_t=\{\xi:\|\xi\|=t\}$ and ${\displaystyle \sigma_{E_1,E_2}(t) = \sum_{\xi\in S_t} \hat{E_1}(\xi)\overline{\hat{E_2}(\xi)}}.$

For $t\neq 0$ we apply Cauchy-Schwarz to get
${\displaystyle |\sigma_{E_1,E_2}(t)|\leq \sqrt{\sigma_{E_1}(t)\sigma_{E_2}(t)}}$, and using (\ref{spav}) conclude that

\begin{equation}\label{okest}
\sum_{t\in \mathbb F^*_q} \sigma_{E_1,E_2}(t)^2 \leq \frac{\sqrt{3}}{q^5} \left(|E_1|^{\frac{5}{2}}+ |E_2|^{\frac{5}{2}}\right)\leq \frac{\sqrt{3}|E|^{\frac{5}{2}}}{\sqrt{8}q^5}.
\end{equation}
In addition, by the construction of the sets $E_1$, $E_2$, we get an inequality similar to \eqref{nullcount}, namely
\begin{equation}\label{okest0}
\sigma_{E_1,E_2}(0) \leq \frac{1}{q^3}|\{u,v\in E_1\times E_2: u-v\in L_+\cup L_-\}|\leq (2\sqrt{|E|})^2 \sqrt{|E|} = \frac{4|E|^\frac{3}{2}}{q^3}.
\end{equation}
Indeed, for $(u,v)\in E_1\times E_2$, the difference $u-v$ cannot lie in $L_+$ while the $L_-$-coordinate is poor, that is the multiplicity of the difference cannot exceed $2\sqrt{|E|}$.

We conclude that as long as $|E|$ is sufficiently small relative to $q^2$ (to ensure the dominance of the estimate (\ref{okest}) over (\ref{okest0})) and
\begin{equation}\label{foth}
|E|^{\frac{3}{2}} \geq 32 q^2
\end{equation}
(to ensure the dominance of the first term in the right-hand side of (\ref{oke})), then
$$\sum_{\theta,z} \nu_1\nu_2(\theta,z) \leq 2\frac{1}{q} |E_1|^{2}|E_2|^{2}.
$$
It follows that
$$
T_1^2(E)\geq \frac{|E_1|^2|E_2|^2 } {\frac{2}{q}  |E_1|^2|E_2|^2 } = \frac{q}{2},
$$
which proves the claim of Theorem \ref{main2} about the quantity $T^2_1(E)$.

\medskip

We now turn to the quantity $T^2_2(E)$. Likewise in Lemma \ref{L1} we have
$$
\sum_{\theta,z} \nu_1^2\nu_2(\theta,z) = \frac{1}{q^4} \sum_\theta \|\nu_1(\theta)\|^2_1\|\nu_2(\theta)\|_1^2 + 8|E| \sum_{\theta,z} \tilde\nu_1(\theta,z)\tilde\nu_2(\theta,z),
$$
where for $i=1,2$
$$
\|\nu_i(\theta)\|_1 = \sum_{z}\nu_i(\theta,z),
\qquad  \tilde\nu_1(\theta,z) = \nu_i(\theta,z)  - \frac{1}{q^2}\|\nu_i (\theta) \|_1.$$

It follows (we now assume the reverse of (\ref{foth}) so that the second term dominates the right-hand side of (\ref{oke})) that
$$
\sum_{\theta,z} \nu_1\nu_2(\nu_1+\nu_2)(\theta,z) \leq  q^{-3}|E|_1^2|E_2|^2|E|^2 + 16q\frac{\sqrt{3}|E|^{\frac{7}{2}}}{\sqrt{8}q^5}.
$$
The first term in the latter estimate dominates if
$$
|E|^\frac{5}{2}\geq 64q^4,
$$
in which case, by Cauchy-Schwarz,
$$
T^2_2(E)\geq \frac{1}{12} q^3.
$$

\vskip.25in 

\section{Proof of Theorem \ref{similar}}
This proof proceeds similarly to that of Theorems \ref{main} and \ref{main2}, with some extra attention to the case $q \equiv 1\mbox{ mod }4$.

\begin{proof}

Without loss of generality assume $\vec{0} \not\in E$. Let $V_k$ once again be the vector space of symmetric $(k+1)\times(k+1)$ matrices which function as ordered distance vectors.
Let $\bar{V}_k$ denote this vector space modulo the equivalence relation of nonzero scaling
of matrices. We will denote the equivalence class of the distance matrix $\mathbb{D}$ in this
set by $\bar{\mathbb{D}}$. We define $\mu_S: V_2 \rightarrow \mathbb{Z}$ by

\begin{equation} \begin{array}{c} \mu_S(\mathbb{D}) = \\ \hfill \\
\# \{ (x_1,x_2,x_3) \in E^3 : \;\exists\ r \in \mathbb{F}_q^*, \|x_i-x_j\| = rd_{i,j}, 1 \leq i <j \leq 3 \}.\end{array}\end{equation}

As $\mu$ respects the equivalence relation of scaling, it induces a unique function
$\mu: \bar{V}_2 \to \mathbb{Z}$ via $\mu(\bar{\mathbb{D}})=\mu(\mathbb{D})$.

\vskip.125in

By Cauchy-Schwarz, we have

$$S^2_2(E) \geq \frac{\left( \sum_{\bar{\mathbb{D}}} \mu_S(\bar{\mathbb{D}}) \right) ^2}{\sum_{\bar{\mathbb{D}}} \mu_S^2(\bar{\mathbb{D}})}.$$

The numerator is $|E|^6$. The denominator counts the number of pairs of similar $2$-simplices. Clearly
\begin{equation} \begin{array}{c} \sum_{\bar{\mathbb{D}}} \mu_S^2(\bar{\mathbb{D}}) = \\ \hfill \\
 \# \{(x_1,x_2,x_3,y_1,y_2,y_3) \in E^6:\exists\, r \in \mathbb{F}_q^*,\theta \in O_{2}(\mathbb F_q),z \in \mathbb{F}_q^2:\, r\theta (x_i) + z = y_i, 1 \leq i \leq 3 \}.\end{array}\end{equation}
 
 \vskip.125in 

As in Theorem \ref{main2} we need to pay special attention to the case $q\equiv 1\mbox{ mod }4.$ Recall that $\iota^2=-1$, and consider the null vectors $n_\pm = (1,\pm \iota)\in \mathbb F_q^2$ (relative to the standard basis)  spanning one-dimensional subspaces $L_{\pm}$. The vectors $n_\pm$ are eigenvectors for any
$\theta\in SO_{2}(\mathbb F_q)$. In other words, the group $SO_{2}(\mathbb F_q)$ acts on $L_{\pm}$ as multiplication by $r\in \mathbb  F_q^*$.
Also recall that the Fourier transform of
each of these subspaces is supported on the subspace itself, see (\ref{nsft}).

We set, for $r\in\mathbb F_q^*$, $\theta\in SO_{2}(\mathbb F_q)$, and $z\in\mathbb F_q^2$,
$$\nu(r,\theta,z) = \# \{(u,v) \in E \times E: u-r\theta (v) = z \}. $$

Then

$$\sum_{\bar{\mathbb{D}}} \mu_S^2(\bar{\mathbb{D}}) = \sum^*_{r \neq 0,\theta,z} \nu^3(r,\theta,z),$$
where the meaning of $\sum^*_{r \neq 0,\theta,z}$ is as follows: Whenever $z\not \in (L_+ \cup L_-)\setminus\{0\}$, the summation is in $(r,\theta,z)$. Otherwise, it is done only in $(r,z)$.
By doing this, we avoid having to make stabilizer corrections, as with this convention
each $(u,v,z)$ choice determines unique values of $r$ and $\theta$.

We still have that  $\displaystyle \sum_{ z}  \nu(r,\theta,z) = |E|^2$ and $\nu(r,\theta,z) \leq |E|$. We also denote $\hat{\nu}(r,\theta,\xi)$ the
Fourier transform of $\nu$ with respect to the $z$ variable only.

Then, employing Lemma \ref{L1} again, we have

\begin{equation}\label{here}\sum^*_{r \neq 0,\theta,z} \nu^3(r,\theta,z) \leq q^2\sum_{r \neq 0,\theta}\left( \frac{|E|^2}{q^2} \right)^3 + 3q^2 |E| \sum^*_{r \neq 0,\theta,\xi \neq 0}
|\hat{\nu}(r,\theta,\xi)|^2.\end{equation}
The fact that the last sum also has a star on it is due to the fact that the Fourier transform of
the null subspace $L_\pm$ is supported on the subspace itself.

We also have, similar to (\ref{fform}):
$$\sum^*_{r \neq 0, \theta, \xi \neq 0} |\hat{\nu}(r,\theta,\xi)|^2 = q^4\sum^*_{r \neq 0, \theta, \xi \neq 0}|\hat{E}(\xi)|^2|\hat{E}(r\theta^{-1}\xi)|^2.$$

We now treat the case $\xi\in L_+ \cup L_-$ separately, when we have
\begin{equation}\label{nulls}
\sum^*_{r \neq 0, \theta,\, \xi \in (L_+ \cup L_-)\setminus\{0\}} |\hat{\nu}(r,\theta,\xi)|^2 = q^4 \sum_{r,s\neq 0,\pm} |\hat{E}(s n_{\pm})|^2|\hat{E}(s n_{\pm})|^2\\
\leq |E|^2.
\end{equation}

\vskip.125in

If $\xi\not\in L_+ \cup L_-$ (when $q\equiv3\mbox{ mod }4$ we simply require $\xi \neq 0$) and $\|\xi\|$ is a nonzero square (respectively non-square) in $\mathbb{F}^*_q$, then as we sum over all $\theta$ and $r$,
$r\theta^{-1}(\xi)$ varies over all vectors with square (respectively non-square) length. Each of these vectors gets represented two times, up to the sign. Thus, denoting $\lambda(\|\xi\|)=\pm 1$, as to whether
$\|\xi\|$ is a square or non-square in $\mathbb F_q^*$, we have:

\begin{equation}
\sum_{r \neq 0, \theta, \xi \not \in (L_+ \cup L_-)} |\hat{\nu}(r,\theta,\xi)|^2 =
2q^4 \sum_{\substack{\xi, \eta: \lambda(\| \xi \|)=\lambda(\|\eta\|)}} |\hat{E}(\xi)|^2|\hat{E}(\eta)|^2 \leq 2|E|^2.
\label{nnulls}\end{equation}

We plug the estimates (\ref{nulls}) and (\ref{nnulls}) back in (\ref{here}) and get, not having to increase the factor 2 in (\ref{nnulls}):

$$ \begin{aligned}\sum_{\bar{\mathbb{D}}} \mu_S^2(\bar{\mathbb{D}}) & \leq  \left(  (q-1)|O_{2}(\mathbb F_q)| \frac{|E|^6}{q^4} + 6q^2|E|^3 \right)\\
&= (1 + o(1))\left(\frac{|E|^6}{q^2} + 6q^2|E|^3\right).\end{aligned}$$

Suppose the first term in the last bracket dominates, that is $|E|\geq 2q^{\frac{4}{3}}$.
It follows that

$$S^2_2(E) \geq \frac{|E|^6}{2q^{-2}|E|^6} =\frac{q^2}{2}.$$

This completes the proof of Theorem \ref{similar}.
\end{proof}

\section{Proof of Theorem \ref{sphere}}

\begin{proof}

Fix $2 \leq k \leq d$.
Let $Q$ be a non-degenerate quadratic form on $\mathbb{F}_q^{d+1}$ and let $S^d$ denote a ``$d$-dimensional sphere'' of radius $r \neq 0$ in
$(\mathbb{F}_q^{d+1},Q)$ centered at the origin. Several times throughout this proof we will be using the fact that $|S^d| = q^d(1 + o(1))$ which is stated in Theorem~\ref{thm:spheresize} in the appendix (as long as $d \geq  1$) and in one case we will need the more explicit bound

$$|S^d| = q^d + O(q^{\frac{d}{2}})$$ which can also be found in the appendix. Let $E$ be a subset of $S^d$ and let $\tilde{T}_k^d(E)$ denote the $O(Q)$-congruence
classes of $k$-simplices determined by $E$. (This is the same as the number of $O(Q)$-congruence classes of pinned $k+1$-simplices determined by $E$,
where the first coordinate is pinned at the origin and is the only coordinate allowed to not lie in $E$.) It is clear that $|\tilde{T}_k^d(E)|=O(q^{{k+1 \choose 2}})$
and we will also use that $|O(Q)|=|O_{d+1}(\mathbb{F}_q)|(1+o(1))$ a fact proven in proposition~\ref{prop:orthogonalsize} in the appendix.

 We begin again with

\begin{equation}\label{sphereIneq}
\tilde{T}^d_k(E) \geq \frac{\left( \sum_{\mathbb{D}} \mu(\mathbb{D}) \right) ^2}{\sum_{\mathbb{D}} \mu^2(\mathbb{D})}
\end{equation}

as we did in the proof of the main result. The numerator, once again, is equal to $|E|^{2k+2}$. Next, we point out that for any pair $(K_1,K_2)$ of congruent $k$-simplices lying on $S^d$, the number of elements of $O(Q)$ rotating $K_1$ onto $K_2$ is the same
as the number of elements of $O(Q)$ that stabilize $K_1$. If the elements of
$K_1$ span a $m$-dimensional subspace, then $m \leq k+1$ and the stabilizer is of type
$O_{(d+1)-m}(\mathbb{F}_q)$ and is of order $q^{\binom{d+1-m}{2}}(1+o(1))$. The minimal
possible stabilizer size occurs in the non-degenerate case where the $k$-simplex in
$S^d$ spans a $(k+1)$ subspace and is of order $q^{\binom{d-k}{2}}(1+o(1))$.

%$$\frac{|O_{d+1}(\bb{F}_q)|}{|S^d|\prod_{i=1}^k q^{d-i}(1 + o(1))}.$$

%To derive this, we first count the number of distinct rigid motions on the sphere, which is %precisely the size of the orthogonal group. Then we count the number of distinct copies of %$K_1$ that can live on the sphere. There are $|S^d|$ choices for the first point of $K_1$. The %second point lies at a distance $\ell$ from the first point, so the number of choices for the %second point is the size of the intersection of $S^d$ and the sphere of radius $\ell$ centered at %the first point. This intersection is a sphere of dimension $d-1$ and therefore has size $q^{d-1}
%(1 + o(1))$. The set of choices for the $i^{th}$ point forms a sphere of dimension $d-i+1$. %Hence the quotient above.

\vspace{.4 cm}

Now, as argued in previous examples we have

$$\sum_{\mathbb{D}} \mu^2(\mathbb{D}) \leq \frac{1}{q^{\binom{d-k}{2}}} \sum_{g \in O(Q)}\left( \sum_z E(z)E(gz) \right)^{k+1}(1+o(1)).$$

Define $ \displaystyle f(g) = \sum_z E(z)E(gz)$ and notice, by lemma \ref{L1}, that our denominator is

$$\leq q^{-\binom{d-k}{2}}\left(|O(Q)|\left( \frac{\|f\|_1}{|O(Q)|} \right)^{k+1} + \frac{(k+1)k\|f\|_\infty^{k-1}}{2}\sum_{g} \left(f(g) - \frac{\|f\|_1}{|O(Q)|} \right)^2\right)(1+o(1))$$

Next, we have

$$\|f\|_1 = \sum_x E(x) \sum_g E(gx).$$

For each $x \in S^d$, $|\{g \in O_{d+1}(\bb{F}_q): gx = x \}|= \frac{|O(Q)|}{|S^d|}$. Thus, $\sum_g E(gx) = \frac{|O(Q)|}{|S^d|}|E|$, and consequently

$$\|f\|_1 = \frac{|O(Q)|}{|S^d|}|E|^2.$$

It is also clear that $\|f\|_\infty \leq |E|$.

\vskip.125in

Thus we have $$
\sum_{\mathbb{D}}\mu(\mathbb{D})^2 \leq q^{\binom{d+1}{2}-\binom{d-k}{2}} \left( \left( \frac{|E|^2}{|S^d|} \right)^{k+1} + \frac{(k+1)k|E|^{k-1}}{2|O_{d+1}(\bb{F}_q)|}\sum_{g} \left(f(g) - \frac{|E|^2}{|S^d|} \right)^2 \right)(1+o(1)).$$

We expand the sum inside:

$$\begin{aligned} \sum_{g} \left(f(g) - \frac{|E|^2}{|S^d|} \right)^2
& = \sum_{g} (f(g))^2 - 2\sum_{g}\frac{|E|^2}{|S^d|}f(g) + \sum_g \left( \frac{|E|^2}{|S^d|}\right)^2\\
& = \sum_{g} (f(g))^2 - 2\frac{|E|^4|O(Q)|}{|S^d|^2} +  \frac{|E|^4|O(Q)|}{|S^d|^2},\end{aligned}$$

and then observe that 

$$\begin{aligned}\sum_{g} (f(g))^2 &= \sum_g \#\{(x,y,z,w) \in E^4 : gx = y, gz = w\}\\&=S + T+R, \end{aligned}$$

where $S$ is the part of the sum coming from $4$-tuples where $x \neq \pm z$, $T$ is the part of the sum coming from
$4$-tuples where $x = z$ and $R$ is the part of the sum coming from $4$-tuples where $x=-z$.
It is easy to see that when $x=z$ we must have $y=w$ also and we get

\begin{equation} \label{T} T=\sum_g \#\{ (x,y) \in E^2 : gx = y \}  = \sum_g f(g) = \| f \|_1 = \frac{|O(Q)|}{|S^d|}|E|^2 \end{equation} 

Similarly when $x=-z$ we must have $y=-w$ also and we get

\begin{equation} \label{R} R=\sum_g \#\{ (x,y) \in (E \cap -E)^2 : gx = y \}  = \frac{|O(Q)|}{|S^d|}|E \cap -E|^2 \end{equation} 

When $x \neq \pm z$ then $y \neq \pm w$ and $x, z$ span a $2$-dimensional subspace $V$ while $y,w$ span a $2$-dimensional subspace $V'$.
$Q|V \cong Q|V'$ if and only if $\|x-z\|=\|y-w\|$ in which case the isometry extends to an element of $O(Q)$ by Witt's Theorem. Thus in this case, $\|x-z\|=\|y-w\|$
if and only if there is $g \in O(Q)$ such that $gx=y, gz=w$.

To find the stabilizer of a pair like $\{x,z \}$, fix $x$ first and note that the stabilizer of $x$ is of order $O_d(\bb{F}_q)$ and acts on the $d$-dimensional space
$x^\perp$ transitively on each sphere of $x^\perp$. Now decompose $z=kx + u$ where $k$ is some scalar and $u \in x^\perp$. Then
$Q(z) = k^2Q(x)+Q(u)+2k\langle u , x \rangle$ where $\langle , \rangle$ is the associated inner product. As $x$ and $z$ lie on the sphere of radius $r$ we see that
$Q(u)=r(1-k^2)$. Because the stabilizer of $x$ will act transitively on all $u$ of a fixed norm, we see that the stabilizer of the pair $x, z$
is the same as the $O_d(\bb{F}_q)$-stablizer of a vector $u$ in $x^\perp$. This vector $u$ is nonzero except when $z$ is a multiple of $x$ which we have avoided
by splitting up the terms $T$ and $R$.
Thus the size of the pair stabilizer will be $|O_{d-1}(\mathbb{F}_q)|$ as long as we are in a situation of ``general sphere size"
in $\mathbb{F}_q^d=x^\perp$. This happens as long as $d \geq 3$ and also when $d = 2$ as long as the corresponding circle determined by $u$ does not have
radius $0$ which only happens when $z = \pm x + u, u \in x^\perp, u \neq 0, \|u\|=0$. In this degenerate $d=2$ case, which can only happen in the hyperbolic plane, one has a stabilizer which is trivial which is $\frac{1}{2}$ of the expected stabilizer $O_1(\mathbb{F}_q)$ which has size $2$. This amounts to a factor of $2$ but will only occur for a small set of $4$-tuples of the form $(x, gx, \pm x + u, \pm y + gu)$. There are at most $|E|2(2q-1)$ of these in the $d=2$ degenerate 
case mentioned above and this discrepancy can be absorbed in the $K$ term discussed below. Thus we have

$$ S = |O_{d-1}(\mathbb{F}_q)|(1+o(1))\#\{ (x,y,z,w) \in E^4 : x \neq \pm z, \|x - z\| = \| y - w \| \}
$$
so
$$ S=|O_{d-1}(\mathbb{F}_q)|(1+o(1))\#\{ (x,y,z,w) \in E^4 : x \neq \pm z, x \cdot Az = y \cdot Aw \}, $$ 

\vskip.125in 

where $\langle x, y \rangle = x \cdot Ay$ is the inner product associated with the quadratic form $Q$. Since $Q$ is non-degenerate, $A$ is an invertible matrix. 

Now consider the quantity
$$|O_{d-1}(\mathbb{F}_q)| \cdot |\{(x,y,z,w) \in E^4: x \cdot Az = y \cdot Aw \}|$$

$$=|O_{d-1}(\mathbb{F}_q)|\sum_{t \in \mathbb{F}_q} (\nu(t))^2$$

where $\nu(t) = \{(x,y) \in E \times F: x \cdot y = t\}$, with $F=AE=\{Ax: x \in E\}$. 

\vskip.125in

It was shown in \cite{HIKR11} that

$$ \sum_{t \in \mathbb{F}_q} (\nu(t))^2 \leq \frac{|E|^2{|F|}^2}{q} + K, $$ where 
$K \leq 2|E||F|q^d$. Since $|E|=|F|$, we have 

$$\sum_{t \in \mathbb{F}_q} (\nu(t))^2 \leq \frac{|E|^4}{q} + K$$
where $K \leq 2|E|^2q^{d}$. Here we used the fact that any line through the origin intersects $E$ in at most $2$ points, as $E$ lies on a sphere
of nonzero radius.

Now we see that the terms $T$ (\ref{T}) and $R$ (\ref{R}) are no more than $|O_{d-1}(\mathbb{F}_q)|q^{d-1}|E|^2$.
Thus we see that they can be absorbed into the $S$ term by
absorbing them into the $K$ term of $\sum_{t} \nu(t)^2$ at the cost of an absolute constant $C > 2$.

Thus

$$\begin{aligned}
\sum_{g} \left(f(g) - \frac{|E|^2}{|S^d|} \right)^2 & \leq |O_{d-1}(\mathbb{F}_q)|\left(\frac{|E|^4}{q} + C|E|^2q^d\right) -\frac{|E|^4|O_{d+1}(\bb{F}_q)|}{|S^d|^2}
\\
&=\frac{|E|^4|O_{d+1}(\mathbb{F}_q)|}{|S^d|}\left(\frac{1}{q|S^{d-1}|} - \frac{1}{|S^d|}\right) + C|E|^2q^d|O_{d-1}(\mathbb{F}_q)|.
\\
& =\frac{|E|^4|O_{d+1}(\mathbb{F}_q)|}{|S^d|}\left(\frac{1}{q^d + O(q^{\frac{d+1}{2}})} - \frac{1}{q^d+O(q^{\frac{d}{2}})}\right)
+ C|E|^2q^d|O_{d-1}(\mathbb{F}_q)|.
\\
& =\frac{|E|^4|O_{d+1}(\mathbb{F}_q)|}{|S^d|}  \left(\frac{O(q^\frac{d+1}{2})}{q^{2d}}
 \right)
+ C|E|^2q^d|O_{d-1}(\mathbb{F}_q)|.
\\
&=|E|^4|O_{d-1}(\mathbb{F}_q)|O(q^{\frac{d+1}{2}-d-1})
+ C|E|^2q^d|O_{d-1}(\mathbb{F}_q)|.
\end{aligned}$$

Let $\epsilon \geq 0$ be a parameter to be optimized later. Both terms above are less than $|E|^4q^{\frac{d(d-4)+1}{2}+\epsilon}$ when $|E| \geq \sqrt{C}q^{\frac{3d+1}{4}-\frac{\epsilon}{2}}$, so our denominator in (\ref{sphereIneq}) is bounded above by

$$q^{\binom{d+1}{2}-\binom{d-k}{2}} \left( \left( \frac{|E|^2}{|S^d|} \right)^{k+1} + \frac{(k+1)k|E|^{k-1}}{2|O_{d+1}(\bb{F}_q)|}\left( 2|E|^4q^{\frac{d(d-4)+1}{2}+\epsilon} \right) \right)$$

$$ = q^{\binom{d+1}{2}-\binom{d-k}{2}} \left( \left( \frac{|E|^2}{|S^d|} \right)^{k+1} + \frac{(k+1)k|E|^{k+3}}{|S^d|q^{\frac{3d-1}{2}-\epsilon}} \right).$$

\vskip.125in

We multiply the denominator by $q^{k+1 \choose 2}$ to get

$$
q^{k+1 \choose 2} \sum_{\mathbb{D}} \mu(\mathbb{D})^2 \leq  |E|^{2k+2} + (k+1)kq^{dk-\frac{3d-1}{2}+\epsilon}|E|^{k+3} .$$

\vskip.125in

If we check when this expression is $ \leq 2|E|^{2k+2}$, we see that it suffices to have

$$|E| > (k(k+1))^{\frac{1}{k-1}} q^{d - \frac{d-1-2\epsilon}{2(k-1)}}.$$

When this happens we have $T_k^d(E) \geq \frac{1}{2} q^{k+1 \choose 2}$ as desired. Putting the two necessary conditions together we see the result holds when $|E| \geq C_0 \max(q^{d-\frac{d-1-2\epsilon}{2(k-1)}}, q^{\frac{3d+1}{4}-\frac{\epsilon}{2}})$ for suitable positive constant $C_0$ depending only on $d$ and $k$.
When $k  \geq 3$, we have $q^{\frac{3d+1}{4}} \leq q^{d-\frac{d-1}{2(k-1)}}$ and so the best choice for the optimizing parameter  is $\epsilon = 0$. When
$k=2$, the best choice of $\epsilon$ is $\epsilon=\frac{d-1}{6}$ in which case the condition becomes $|E| \geq C_0 q^{\frac{2d+1}{3}}$.
\end{proof}

\vskip.25in

\section{Proof of Theorem \ref{sharpness}}

\vskip.125in

\medskip

Part i) was established in \cite{HIKR11} for the case of $d=2$ and odd $d\geq 3$. Let us review the latter result in order to reflect on the case of even $d$ further in Part ii).
Let $q$ be a large enough prime. The construction is based on the claim in \cite{HIKR11} (Lemma 5.1) that if $d$ is even, there are $\frac{d}{2}$ mutually orthogonal linearly independent null vectors $n_i$, $i=1,\ldots,\frac{d}{2}$ in $\mathbb F_q^d$. That is $\forall i,j=1,\ldots,\frac{d}{2},$ $n_i\cdot n_j=0$. Let $N={\rm span}(n_1,\ldots, n_{\frac{d}{2}})$. Then, if $d=2k+1\geq 3$, one can take a $(k+1)$-dimensional subspace $L$, containing $N$ in  $\mathbb F_q^{2k+1},$ such that the distance form restricted to $L$ is very degenerate: for $x\in L$, $\| x\| = x^2_{k+1},$ where $x_{k+1}$ is the coordinate in $L/N.$

Hence, one can take $E=\{x\in L: x_{k+1}\in I\},$ where $I$ is an interval of length $cq$, and have $|T^d_1(E)|<2cq$, thus showing that the exponent $\frac{d+1}{2}$ in Theorem \ref{main} is optimal and attainable up to the endpoint (by Theorem \ref{main}) for the quantity $T^d_1$ and odd $d\geq 3$.

\medskip
Part ii) deals with even $d$, and for $d=2$ the usual Euclidean lattice example shows that the exponent $\alpha_{1,2}=1$ is unattainable up to the endpoint. Indeed, for any $C$, no matter how large, there is a sufficiently large prime $q$ such that the point set $E=[1,\ldots,\lceil\sqrt{Cq}\rceil]\times [1,\ldots,\lfloor\sqrt{Cq}\rfloor]$ determines $O(\frac{Cq}{\sqrt{\log q}})$ distinct distances, considered as elements of $\mathbb Z$, and therefore $O(\frac{C^2q}{\sqrt{\log q}})$ distinct distances modulo $q$. The latter quantity clearly cannot be bounded from below by $cq$, for some universal $c$ which would work for all large $q$.

A construction, joining this one with the one in the proof of Part i) above can be developed in dimension $d=2(2k+1)\geq 6$. Indeed, if $d=2(2k+1)\geq 6$, then there is a basis $\{n_1,\ldots,n_{k},e,n'_1,\ldots,n'_{k},e'\}$, with respect to which the distance form is just $x^2+y^2$, the two coordinates being relative to the length $1$ vectors $e$, $e'$.

Finally, for any even $d=2k\geq 4$  one can do the following. Let $\{n_1,\ldots,n_k\}$ span an isotropic null subspace $N$, that is $\forall i,j=1,\ldots,k$, $n_i\cdot n_j=0$. Then there is a vector $e$, such that $e\cdot e=1$, $e\cdot n_1=1$, and for $i=2,\ldots,k$, $e\cdot n_i=0$. Indeed, the vector set $\{n_1,\ldots,n_k\}$ is linearly independent. Augment it to an arbitrary basis $\{n_1,\ldots,n_k,v_1,\ldots,v_k\}$ in $\mathbb F_q^{2k}$, let $x_1,\ldots,x_k$ be the coordinates relative to the added basis vectors $\{v_1,\ldots,v_k\}$.
 Now one seeks $e= \sum_{i=1}^k x_kv_k$, and the conditions $e\cdot n_1=1$, and for $i=2,\ldots,k$, $e\cdot n_i=0$ can be satisfied if and only if the matrix 
$G$, whose elements $g_{ij}=v_i\cdot n_j$ is non-degenerate. It is, for otherwise there would be a linear combination of $v_1,\ldots,v_k$ orthogonal to $N$, but $N^\perp=N$.

Finally, since $n_1$ is defined up to a scalar multiplier, one can always ensure that $e\cdot e=1$ by scaling $n_1$ accordingly.

Hence, there is a $(k+1)$-dimensional subspace $L$ in $\mathbb F_q^{2k}$, where the distance form equals $x(x+y)$, where $x$ is the coordinate relative to $e$ and $y$ to $n_1$.
Now, let $E=\{v\in L: x,y\in [1,\ldots,\lceil\sqrt{Cq}\rceil]\}.$ Clearly $|E|\geq Cq^{k}$. On the other hand, all the distances that $E$ generates, considered as elements of $\mathbb Z$ are contained in the product set of the interval $[1,\ldots,\lceil 2\sqrt{Cq}\rceil]$, whose size is $O( \frac{Cq}{\log\log q})$ and therefore we have $O(\frac{C^2q}{\sqrt{\log q}})$ distances modulo $q$. The latter quantity clearly cannot be bounded from below by $cq$, for some universal $c$ which would work for all large $q$.

This proves Part ii).

\medskip

Next we provide a proof of Part iii) along with a separate proof of the case $k=d=2$ and the prime $q \equiv 1 \bmod 4$. The first is a general Cartesian construction. The other proof serves to  motivate the sum-product result in Corollary \ref{spcor}.

\underline{Proof 1}
Let $Q$ be a nondegenerate quadratic form on $\mathbb{F}_q^d$. Suppose, $q$ is large and write $q=p^m$ where $p$ is a prime. 

%(The forthcoming construction enables one to replace $q$ with $q^n$, with the involved constants consequently depending on $n$.)
 
First, notice that it suffices to show $\alpha_{d,d} \ge d-1 + 1/d$, as when $1 \le k < d$, we can simply use the $k=d$ case to find a suitable example in a $k$-dimensional subspace $V$ with $Q|V$ non-degenerate to get $\alpha_{k,d} \ge k-1+ 1/k$. It is easy to see such a subspace exists by the diagonalizability of quadratic forms.

Let $\epsilon \in (0, 1/d)$. Take a non-null vector $\tau$ and use the orthogonal
decomposition $\tau^{\perp} \oplus <\tau>$ to identify $\mathbb{F}_q^d = \mathbb{F}_q^{d-1} \oplus \mathbb{F}_q$. For the last 
coordinate $y$, fix a $\mathbb{F}_p$ basis for $\mathbb{F}_q$ and write $y \in \mathbb{F}_q$ as $(y(1),\dots,y(m)) \in \mathbb{F}_p^m$ where $y(i)$ 
are the $\mathbb{F}_p$ coordinates of $y \in \mathbb{F}_q$ with respect to this $\mathbb{F}_p$-basis. Consider the set

$$E =  \{ (x, y) : x \in \bb{F}_{q}^{d-1}, 0 \leq y(j) \leq  p^{1/d - \epsilon}, 1\leq j \leq m  \} \subset \bb{F}_q^{d-1} \oplus \bb{F}_q = \bb{F}_q^d.$$

Let $G$ = $$\{( (x_1, y_1), \ldots ,(x_{d+1},y_{d+1}) ) \in  (\bb{F}_q^d)^{d+1}: (x_1, y_1) \in E-E, - p^{1/d-\epsilon}  \leq y_1(j)-y_i(j) \leq p^{1/d - \epsilon} \}$$
where $1 \leq i \leq d+1, 1 \leq j \leq m$ in the description of $G$.

Notice that we can view $G$ as a set of $d$-simplices and that $E^{d+1} \subset G$. Thus the number of distinct congruence classes of $d$-tuples in $E^{d+1}$ is less than or equal to that of $G$. Also notice that if we translate a $d$-simplex in $E$ by an element in $-E$, the $d$-simplex will remain in $G$. Thus each $d$-simplex of $E$ occurs in $G$ with multiplicity at least $|E|$. We can do even better, though. Any rigid motion applied to a $d$-simplex of $G$ that preserves $y$-coordinates will keep the simplex in $G$. This includes any elements of the orthogonal group $O(Q)$ that leave the $y$-coordinate fixed, i.e., which stabilize $\tau$. There are $|O_{d-1}| = 2q^{d-1 \choose 2}(1 + o(1))$ such elements. Thus any congruence class that appears in $G$ occurs at least $2|E|q^{d-1 \choose 2}(1 + o(1))$ times. By $E$'s construction, $|E| = q^{d-1 + 1/d - \epsilon} + O(q^{d-1})$, but the number of distinct $d$-simplices up to congruence is less than

$$\frac{|G|}{2|E|q^{d-1 \choose 2}(1 + o(1))} \leq \frac{2^m|E|(2^m|E|)^d}{2|E|q^{{d-1 \choose 2}}(1+o(1))}
\leq \frac{2^{m(d+1)-1} |E|^d(1+o(1))}{q^{d-1 \choose 2}}$$
$$\leq 2^{m(d+1)-1}q^{d^2-d+1-d^2/2 + 3d/2 - 1 - d\epsilon}(1+o(1)) = 2^{m(d+1)-1}q^{{d+1 \choose 2} - d\epsilon}(1+o(1)).$$

Note $2^{m(d+1)}=p^{m log_p(2) (d+1)} = q^{log_p(2)(d+1)}$. As long as $p \to \infty$ as $q \to \infty$, then we can choose 
$\epsilon \geq 2log_p(2)$ in this argument and obtain the result. For example when $q$ is restricted to be a prime, i.e. $q=p, m=1$ then this term is independent of $q$ 
and is hence negligible as $q \to \infty$.

\underline{Proof 2}

Consider the case of prime $q\equiv 1 \;{\rm mod}\; 4$, recall that $\iota$ is the element of ${\mathbb F}_q$, such that $\iota^2=1$.

Consider the pair of ``null vectors'' $n_{\pm}$, with coordinates $(1,\pm\iota)$, respectively, in the standard basis,  as the basis in ${\mathbb F}_q^2$. Clearly, with respect to the standard dot product, $n_\pm \cdot n_\pm =0$, while $n_+\cdot n_- = 2$. Recall that $L_\pm$ are the one-dimensional ``null spaces'' spanned by, respectively, the vectors $n_\pm$. Note that
$$
L_+\cup L_- = \{\xi\in {\mathbb F}_q^2:\|\xi\|=0\},
$$
and for every $r\in {\mathbb F}_q^*$,

$$|\{\xi\in {\mathbb F}_q^2:\|\xi\|=r\}| = q-1.$$

Let us now take $E$ as the union of cosets of $L_+$ as an additive subgroup of ${\mathbb F}_q^2$, the coset representatives lying in some subset of $L_-$.

The set $E$ can be described in coordinates $(x,y)$, relative to the basis vector $\{n_+, \frac{1}{2}n_-\}$. In these coordinates, let $E= {\mathbb F}_q\times Y,$ for some $Y\subseteq {\mathbb F}_q$.

Given $u_1,u_2,u_3\in E$, with coordinates $(x_i,y_i),$ $i=1,2,3$, respectively, the distances $d_{ij}$ between pairs of distinct points, $1\leq i<j\leq 3$ equal
$$
d_{ij}  = (x_i-x_j)(y_i-y_j).
$$
Suppose, the points $u_1,u_2,u_3$ are such that all the three distances are nonzero. Then, since $x_1-x_3 = (x_1-x_2)-(x_3-x_2)$, we can eliminate the $x$ variables from the above three equations and get
\begin{equation}
\frac{d_{12}}{y_1-y_2} - \frac{d_{23}}{ y_3-y_2} = \frac{d_{13}}{ y_1-y_3}.
\label{rats}\end{equation}
Suppose $Y$ is an interval $I$ of length  $\lfloor \frac{1}{2}\sqrt{q/2-1} \rfloor$. Set $a=y_1-y_2\neq 0$ and $b=y_3-y_2\neq 0$. With $a,b\in I-I$ and $a\neq b$, we have
$$
a^{-1}d_{12} - b^{-1}d_{23} = \frac{d_{13}}{ a-b}.
$$
Given $d_{12},d_{23}\neq 0$ there are at least $\frac{q-1}{2}$ choices of $d_{13}$ so that a triangle with sidelengths $d_{12},d_{23},d_{13}$ is constructable in ${\mathbb F}_q^2$. Thus, for a fixed $d_{12}$ and $d_{23}$, the map $f:\bb{F}_q^* \times \bb{F}_q^* \rightarrow {\mathbb F}_q$, defined by
$$
f(a,b) = (1-b/a) d_{12} + (1-a/b) d_{23},
$$
cannot assume all of the possible ``third sidelength values" if $\left|\frac{I-I}{I-I}\right| < \frac{q-1}{2}$. In our case, $I$ was chosen to be an interval, so $|I-I| \leq 2|I|.$ It follows that $\left|\frac{I-I}{I-I}\right| \leq (2|I|)^2 \leq q/2-1$. $\Box$

\section{Proof of Corollary \ref{spcor}}
Suppose $q \equiv 1 \mbox{ mod }4$ is a prime. One can reverse the construction in the second proof of Theorem \ref{sharpness}. Let $E=X\times Y$, where the sets $X,Y\subseteq \mathbb F_q$ are taken along the $n_\pm$-axes. Then the set $T_1^2(E) = (X-X)\times (Y-Y)$, and the Corollary follows from (\ref{dist}). Moreover, as we have shown in the proof of Theorem \ref{main2}, the conclusion of the Theorem applies to the quantity $T_1^2(E_1,E_2)$, where the line segments in question have vertices in different sets. Hence, one can vary the signs in the ensuing sum-product type inequality by considering, say $E_1=X\times Y$ and $E_2=(-X)\times (-Y).$

The same estimate is valid in the case of prime $q \equiv 3 \mbox{ mod }4$ as well. Here, the quadratic forms $x_1^2+x_2^2$ and $x_1^2-x_2^2$ on $\mathbb F_q^2$ are different (they are the same in the $q \equiv 1 \mbox{ mod }4$ case, since one can change $x_2$ to $\iota x_2$. So suppose we are dealing with the ``Minkowski form'' $x_1^2-x_2^2$ over $\mathbb F_q^2$, $q \equiv 3 \mbox{ mod }4$.

After a change of coordinates, the Minkowski form becomes $x_1x_2$, and one can repeat the proof of Theorem \ref{main2} word by word, applying to distances
$\|u-v\|=(u_1-v_1)(u_2-v_2)$. The group $SO_2(\mathbb F_q)$ is then replaced by multiplication of elements of $\mathbb F_q^2$ by $r  \in \mathbb F_q^*$. As in the proof of Theorem \ref{main2}, one partitions the set $E$ as $E_1$ and $E_2$, now in the standard basis and looks at the Minkowski distances between two sets, when the zero distance will satisfy the estimate (\ref{nullcount}) as follows:
$$\#\{(u,v) \in E_1\times E_2: \|u-v\| = 0\}\leq 8|E|^{\frac{3}{2}}.$$
Finally, for $t\in \mathbb F_q^*$, the quantity $\sigma_E(t)$ in Lemma \ref{mlem} will be defined in exactly the same way, only relative to the hyperbola $x_1x_2=t$.
The inspection of the proof of Lemma \ref{mlem}, that is Lemma 4.4 in \cite{CEHIK12} shows that the lemma still applies to the Minkowski distance analog of the quantity $\sigma_E(t)$: the proof uses only two facts about the set $S_t=\{\xi:\|\xi\|=t\}$: that its cardinality is $q(1+o(1))$ and that the maximum multiplicity of an element of the sumset $S_t+S_t$ equals $2$.

\qed

\section{Appendix}
\subsection{Quadratic Forms, Spheres and Isotropic Subspaces}

Let $k$ be a field of characteristic not equal to $2$. Let $V$ be a finite dimensional $k$-vector space and $Q$ a quadratic form on $V$.
Associated to $Q$, there is a symmetric bilinear inner product $\langle \cdot, \cdot \rangle: V \times V \to k$ such that $Q$ is the norm map
$Q(x)=\langle x,x\rangle$. $Q$ is said to be non-degenerate if $\langle x,y \rangle = 0$ for all $y \in V$ implies $x=0$. The isometry group of
$Q$ is denoted $O(Q)$, the orthogonal group associated to $Q$, and consists of all linear transformations which preserve (the inner product associated to)
$Q$. When $Q$ is the standard dot product on $\mathbb{F}_q^d$, then the isometry group is denoted $O_d(\mathbb{F}_q)$.

$Q$ can be represented by a symmetric matrix $\mathbb{B}$ such that $Q(x)=x^T \mathbb{B} x$ and under change of basis $\mathbb{A}$,
$\mathbb{B}$ transforms as $\mathbb{B} \to \mathbb{A}^T\mathbb{B}\mathbb{A}$ and so the determinant of $\mathbb{B}$ modulo squares is an
invariant of $Q$ called the discriminant, denoted by $disc(Q)$. If $Q$ is non-degenerate, we view the discriminant as an element of $k^*/(k^*)^2$. \\

When $k$ is a finite field of odd characteristic, Witt showed (see for example \cite{Lang} or \cite{Ser73}) that two quadratic forms $Q, Q'$ are isomorphic if and only if they are equal in dimension (via the vector space they are defined on) and discriminant. Thus on $\mathbb{F}_q^d$ there are only two distinct non-degenerate quadratic forms up to isomorphism:
the standard dot product and another one. We will denote their isometry groups as $O_d(\mathbb{F}_q)$ and $O_d'(\mathbb{F}_q)$ for the remainder
of this section.

Furthermore if $x$ and $y$ are two nonzero elements of $\mathbb{F}_q^d$, Witt also showed that there is an isometry taking $x$ to $y$ if and only if
$x$ and $y$ have the same norm i.e., $Q(x)=Q(y)$. Thus for any non-degenerate quadratic form, the sphere of radius $r$ in $(\mathbb{F}_q^d, Q)$ can be identified as the quotient of $O(Q)$ by the stabilizer of any given element on that sphere. Thus a discussion of sphere sizes is equivalent to a discussion of
isometry group and stabilizer sizes.

Let $H$ denote the hyperbolic plane, the $2$-dimensional quadratic form represented by matrix
$$\begin{bmatrix} 0 & 1 \\ 1 & 0 \end{bmatrix}$$ of discriminant $-1$. $H$ has the important property that it has two totally isotropic lines (lines consisting completely of vectors of norm $0$) and its $n$-fold orthogonal direct sum $nH$ has maximal totally isotropic subspaces of dimension $n$ called Lagrangian subspaces.
$H$ plays an important role in the classification of non-degenerate quadratic forms over finite fields $F$ of odd characteristic. One can characterize completely the structure of any such non-degenerate quadratic form as follows:

If $dim(V)=2n$ is even then the isomorphism class of $Q$ has two possibilities: \\
$$
Q \cong \begin{cases}
nH \text{ if } (-1)^n disc(Q) \text{ is a square} \\
(n-1)H \oplus N_{K/F} \text{ if } (-1)^n disc(Q) \text{ is a nonsquare.}
\end{cases}
$$
Here $N_{K/F}$ is the $2$-dimensional quadratic form given by the norm map of the unique degree $2$ extension field $K$ of $F$. \\
If $dim(V)=2n+1$ is odd then
$$
Q \cong nH \oplus cx^2
$$
where $c=(-1)^n disc(Q)$.

As the standard dot product on $\mathbb{F}_q^d$ has dimension $d$ and discriminant $1$ it follows that when $d$ is even, it is isomorphic to
$\frac{d}{2} H$ if $(-1)^{\frac{d}{2}}$ is a square modulo $q$ and $\frac{d-2}{2}H \oplus N_{K/F}$ if not. When $d$ is odd, it is isomorphic to
$\frac{d-1}{2} H \oplus (-1)^{\frac{d-1}{2}}x^2$. Regardless, when $d \geq 3$, {\bf totally isotropic subspaces are unavoidable}.

The following theorem was proved by Minkowski at the age of 17 (see \cite{Min11}) based on the classification we just discussed:

\begin{theorem}
\label{thm:spheresize}
Let $S_r^Q$ denote the sphere of radius r in $(V,Q)$ i.e., $S_r^Q = \{ x \in V | Q(x)=r \}$ and let $\nu_Q(r)=|S_r^Q|$ then:
If $Q \cong nH$ so that $d=2n$ then $$\nu_Q(r)=\begin{cases} q^{d-1} - q^{\frac{d-2}{2}} \text{ if } r \neq 0 \\
q^{d-1} + q^{\frac{d}{2}} - q^\frac{d-2}{2} \text{ if } r = 0
\end{cases}$$
If $Q \cong (n-1)H \oplus N_{K/F}$ so that $d=2n$ then
$$\nu_Q(r)=\begin{cases} q^{d-1} + q^{\frac{d-2}{2}} \text{ if } r \neq 0 \\
q^{d-1} - q^{\frac{d}{2}} + q^\frac{d-2}{2} \text{ if } r = 0
\end{cases}$$
If $Q \cong nH \oplus cx^2$ so that $d=2n+1$ then
$$\nu_Q(r)=\begin{cases} q^{d-1} + q^{\frac{d-1}{2}}sgn(r/c) \text{ if } r \neq 0 \\
q^{d-1} \text{ if } r = 0
\end{cases}$$
where $sgn$ stands for the Legendre symbol. In particular the size of a sphere is always $q^{d-1}(1+o(1))$ when $d \geq 2$ except
maybe when $d=2, r=0$. The size is always also $q^{d-1} + O(q^{\frac{d}{2}})$. Also note that in any given case, there are at most 3 possibilities
for sphere sizes. Thus in any given scenario, there are only 3 possibilities for the sizes of stabilizers of nonzero points in $O(Q)$.
\end{theorem}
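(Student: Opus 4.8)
The plan is to evaluate $\nu_Q(r)$ by a standard additive-character computation and then read the three cases off the classification of non-degenerate forms recalled above. Fix a non-trivial additive character $\chi$ of $\mathbb{F}_q$ and use orthogonality, $\sum_{t\in\mathbb{F}_q}\chi(ts)=q$ if $s=0$ and $0$ otherwise, to write
$$\nu_Q(r)=\frac{1}{q}\sum_{x\in V}\sum_{t\in\mathbb{F}_q}\chi\big(t(Q(x)-r)\big)=q^{d-1}+\frac{1}{q}\sum_{t\neq0}\chi(-tr)\,S(t),\qquad S(t):=\sum_{x\in V}\chi\big(tQ(x)\big).$$
Since $q$ is odd, $Q$ diagonalizes, $Q\cong a_1x_1^2+\cdots+a_dx_d^2$, so $S(t)$ factors into one-dimensional quadratic Gauss sums. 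Using $\sum_x\chi(bx^2)=\eta(b)G$ for $b\neq0$, where $\eta$ is the quadratic character and $G:=\sum_x\chi(x^2)$ satisfies $G^2=\eta(-1)q$, one obtains $S(t)=\eta(t)^d\,\eta(a_1\cdots a_d)\,G^d=\eta(t)^d\,\eta\big(\mathrm{disc}(Q)\big)\,G^d$, since $a_1\cdots a_d$ is the determinant of a Gram matrix of $Q$ and hence represents $\mathrm{disc}(Q)$ modulo squares.

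Now split on the parity of $d$. If $d=2n$ is even, then $\eta(t)^d=1$ and $G^d=\eta\big((-1)^n\big)q^n$, so $S(t)=\varepsilon q^n$ is independent of $t$, where $\varepsilon:=\eta\big((-1)^n\mathrm{disc}(Q)\big)\in\{\pm1\}$; since $\sum_{t\neq0}\chi(-tr)$ equals $q-1$ for $r=0$ and $-1$ for $r\neq0$, this gives $\nu_Q(r)=q^{d-1}+\varepsilon\big(q^{d/2}-q^{(d-2)/2}\big)$ when $r=0$ and $\nu_Q(r)=q^{d-1}-\varepsilon q^{(d-2)/2}$ when $r\neq0$. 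By Witt's classification as recalled above, $\varepsilon=+1$ is precisely the case $Q\cong nH$ and $\varepsilon=-1$ the case $Q\cong(n-1)H\oplus N_{K/F}$, which yields the first two displayed formulas. If $d=2n+1$ is odd, then $\eta(t)^d=\eta(t)$ and $G^d=\eta\big((-1)^n\big)q^nG$, hence
$$\nu_Q(r)=q^{d-1}+q^{n-1}\,\eta\big((-1)^n\mathrm{disc}(Q)\big)\,G\sum_{t\neq0}\eta(t)\chi(-tr).$$
For $r=0$ the twisted sum $\sum_{t\neq0}\eta(t)$ vanishes, so $\nu_Q(0)=q^{d-1}$; for $r\neq0$ a change of variable gives $\sum_{t\neq0}\eta(t)\chi(-tr)=\eta(-r)G$, and then $G^2=\eta(-1)q$ collapses the correction term to $q^n\,\eta\big((-1)^n\mathrm{disc}(Q)\cdot r\big)$, which, with $c=(-1)^n\mathrm{disc}(Q)$ as in the decomposition $Q\cong nH\oplus cx^2$, is exactly $q^n\,\mathrm{sgn}(r/c)$ — the third formula. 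The remaining assertions ($|S_r^Q|=q^{d-1}(1+o(1))$ for $d\geq2$ with the single exception $d=2,\,r=0$; $|S_r^Q|=q^{d-1}+O(q^{d/2})$; at most three distinct values, hence at most three stabilizer sizes in $O(Q)$) are read off the explicit formulas.

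There is no serious obstacle, only bookkeeping: keeping track of the Legendre-symbol factors, using $G^2=\eta(-1)q$, and in the odd case verifying that $\eta\big((-1)^n\mathrm{disc}(Q)\big)\,\eta(-r)\,\eta(-1)$ simplifies to $\mathrm{sgn}(r/c)$ with $c=(-1)^n\mathrm{disc}(Q)$. An alternative argument, closer to Minkowski's and leaning on the classification even more directly, avoids Gauss sums entirely: one computes the base cases by hand — for $H$ the equation $xy=r$ has $q-1$ solutions when $r\neq0$ and $2q-1$ when $r=0$; for $N_{K/F}$ the norm map $\mathbb{F}_{q^2}^{*}\to\mathbb{F}_q^{*}$ is surjective with kernel of order $q+1$, giving $\nu_{N_{K/F}}(r)=q+1$ for $r\neq0$ and $1$ for $r=0$; for $cx^2$ the count is $1+\mathrm{sgn}(r/c)$ for $r\neq0$ and $1$ for $r=0$ — and then inducts on the number of hyperbolic summands via the convolution identity $\nu_{Q_1\oplus Q_2}(r)=\sum_s\nu_{Q_1}(s)\,\nu_{Q_2}(r-s)$. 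Either route produces the stated formulas with the same amount of elementary computation.
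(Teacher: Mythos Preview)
Your proof is correct. Both routes you outline --- the Gauss-sum computation and the inductive convolution argument via $\nu_{Q_1\oplus Q_2}(r)=\sum_s\nu_{Q_1}(s)\nu_{Q_2}(r-s)$ --- are standard and complete; the bookkeeping with $G^2=\eta(-1)q$ and the simplification $\eta(-r)\eta(-1)\eta\big((-1)^n\mathrm{disc}(Q)\big)=\eta(r/c)$ in the odd case check out.

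There is nothing to compare against, however: the paper does not prove this theorem at all. It is stated in the appendix as a classical result of Minkowski, with a citation, and is used as a black box throughout. So your proposal supplies a proof where the paper simply invokes the literature. Of your two approaches, the inductive one is closer in spirit to what the paper attributes to Minkowski (``based on the classification we just discussed''), while the character-sum argument is the more modern route and arguably cleaner since it handles all three structural cases uniformly once $S(t)$ is factored.
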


The degenerate case possibilities in Theorem~\ref{thm:spheresize} effect sizes of spheres and stabilizers but not the magnitude of the isometry groups.
This is because all non-degenerate quadratic forms in 1 dimension are nonzero multiples of each other and hence have exactly the same isometry group.
Then using induction in dimension $2$ and higher, and that nonzero radius sphere counts agree up to $(1+o(1))$ in all cases from Theorem~\ref{thm:spheresize}, one can prove that for any two non-degenerate $d$-dimensional
quadratic forms $Q$ and $Q'$ one has $|O(Q)| = |O(Q')|(1+o(1))$ as $q \to \infty$. In fact one has:

\begin{proposition}
\label{prop:orthogonalsize}
Let $Q$ be a non-degenerate quadratic form on $\mathbb{F}_q^d$. Then
$|O(Q)| = 2q^{{d \choose 2}}(1+o(1))$ as $q \to \infty$.
\end{proposition}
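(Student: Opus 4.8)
The plan is to argue by induction on the dimension $d$, exploiting the orbit--stabilizer theorem together with Witt's theorem (recalled above) and the sphere counts of Theorem~\ref{thm:spheresize}. For the base case $d=1$, every non-degenerate quadratic form is $Q(x)=cx^2$ with $c\neq 0$, so $O(Q)=\{\pm1\}$ and $|O(Q)|=2=2q^{{1\choose 2}}$, in agreement with the claim.

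For the inductive step, I would first observe that a non-degenerate quadratic form on $\mathbb{F}_q^d$, $d\geq 1$, always represents some nonzero value (in dimension one this is immediate; for $d\geq 2$ such a form is universal). Fix $v\in\mathbb{F}_q^d$ with $r:=Q(v)\neq 0$. Since $v$ is anisotropic we have $\mathbb{F}_q^d=\langle v\rangle\oplus v^\perp$, the restriction $Q':=Q|_{v^\perp}$ is a non-degenerate quadratic form on $\mathbb{F}_q^{d-1}$, and the stabilizer of $v$ in $O(Q)$ is exactly $O(Q')$: any isometry fixing $v$ preserves $v^\perp$ and restricts there to an isometry of $Q'$, while conversely every isometry of $Q'$ extends to $\mathbb{F}_q^d$ by fixing $v$. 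By Witt's theorem $O(Q)$ acts transitively on the sphere $S^Q_r=\{x:Q(x)=r\}$, so the orbit of $v$ is all of $S^Q_r$, and the orbit--stabilizer theorem gives
$$|O(Q)| = |S^Q_r|\cdot|O(Q')|.$$
Now Theorem~\ref{thm:spheresize} gives $|S^Q_r|=q^{d-1}(1+o(1))$, since $r\neq 0$ (the lone exceptional case there, $d=2$ with $r=0$, has been avoided), and the induction hypothesis gives $|O(Q')|=2q^{{d-1\choose 2}}(1+o(1))$. Multiplying and using ${d-1\choose 2}+(d-1)={d\choose 2}$ yields $|O(Q)|=2q^{{d\choose 2}}(1+o(1))$, completing the induction.

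Once the structural ingredients are in place this is essentially bookkeeping, so there is no single hard step; the points demanding a little care are ensuring that a nonzero-norm vector always exists (so the recursion can get off the ground and the relevant sphere is genuinely of a ``generic'' size), checking that $Q|_{v^\perp}$ stays non-degenerate, and noting that the degenerate $d=2$, $r=0$ line of Theorem~\ref{thm:spheresize} is never invoked because $r$ is chosen nonzero. Because $d$ is fixed, only finitely many $(1+o(1))$ factors get multiplied together, so their accumulation causes no trouble.
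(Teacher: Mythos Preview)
Your proof is correct and follows essentially the same approach as the paper: both arguments use the orbit--stabilizer theorem together with Witt's theorem to identify $|O(Q)|$ as a product of sphere sizes down to the one-dimensional base case $|O_1|=2$, and then invoke Theorem~\ref{thm:spheresize}. The paper simply compresses the induction into the single line $|O_d(\mathbb{F}_q)|=|S^{d-1}|\cdots|S^1|\cdot|O_1(\mathbb{F}_q)|$, while you spell out the inductive step more carefully (in particular the existence of an anisotropic vector and the non-degeneracy of $Q|_{v^\perp}$).
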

\begin{proof}
$O_d(\mathbb{F_q})/O_{d-1}(\mathbb{F}_q)$ is bijective to $S^{d-1}$, the sphere of radius $1$
in $\mathbb{F}_q^d$ by action theory. Hence
$|O_d(\mathbb{F}_q)|=|S^{d-1}| \dots |S^1||O_1(\mathbb{F}_q)|$. As $|O_1(\mathbb{F}_q)|=2$  the proposition follows from
Theorem~\ref{thm:spheresize}.
\end{proof}

\subsection{Degenerate simplices}
Let us work inside $\mathbb{F}_q^d$ where $q$ is odd in this section.

Let $[x_0, \dots, x_k]$ be a $k$-simplex in $\mathbb{F}_q^d$. After a translation this simplex transforms to the $0$-pinned $k$-simplex
$[0, x_1-x_0, \dots, x_k-x_0]$ which has the same congruence class and ordered distance vector. It is easy to see that the stabilizer in the Euclidean
group of $[x_0, \dots, x_k]$ has exactly the same size as the stabilizer in $O_d(\mathbb{F}_q)$ of the pinned simplex $[0, x_1-x_0, \dots, x_k-x_0]$ so
we will restrict out attention to $O(Q)$ and pinned simplices in this subsection.

The dimension of a $k$-simplex is the dimension of the space spanned by the vertices in its corresponding pinned simplex. A $k$-simplex is called
non-degenerate if this is as big as possible, i.e. $k$-dimensional, and degenerate otherwise. All $k$-simplices are degenerate when $k > d$.

Take a $k$-simplex and let $V$ denote the vector space generated by its associated pinned simplex. Suppose we have a non-degenerate quadratic form $Q$ on $\mathbb{F}_q^d$ like for example the standard dot product.
Let $V^\perp$ denote the perp with respect to $Q$. We then say that $V$ is good if $V \cap V^{\perp} = 0$. Under these conditions, $Q \cong Q|_V \oplus
Q|_{V^\perp}$ and it is easy to show
that the stabilizer of the $k$-simplex in $O(Q)$ can be identified with $O(Q|_{V^\perp})$ which has the size $|O_{d-m}(\mathbb{F}_q)|(1+o(1))$ where $m$ is the dimension of $V$.
Thus we have proven the following lemma:

\begin{lemma}
\label{lemma: good simplices}
Let $Q$ be a non-degenerate quadratic form on $\mathbb{F}_q^d$, $q$ odd. Consider a good $k$-simplex, i.e. a simplex whose associated pinned
simplex spans a space $V$ of dimension $m \leq k$ with $V \cap V^{\perp} = 0$. Then the stabilizer of the simplex in $O(Q)$ is $O(Q|_{V^\perp})$,
a subgroup of size
$|O_{d-m}(\mathbb{F}_q)|(1+o(1))$. The minimum size is achieved in the non-degenerate case and is of size $|O_{d-k}(\mathbb{F}_q)|(1+o(1))$.
\end{lemma}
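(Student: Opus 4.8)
The plan is to reduce the statement to a clean piece of linear algebra about orthogonal decompositions, together with the size formula for orthogonal groups from Proposition~\ref{prop:orthogonalsize}.

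First I would note that the radical of the restricted form $Q|_V$ is exactly $V\cap V^\perp$, so the hypothesis $V\cap V^\perp=0$ says precisely that $Q|_V$ is non-degenerate. A standard fact about non-degenerate subspaces (over any field of characteristic $\neq 2$; see the references on quadratic forms in the appendix) then gives the orthogonal direct sum decomposition $\mathbb{F}_q^d = V\oplus V^\perp$ with $Q\cong Q|_V\oplus Q|_{V^\perp}$; in particular $Q|_{V^\perp}$ is itself non-degenerate, of dimension $d-m$ where $m=\dim V$.

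Next I would identify the stabilizer. An element $g\in O(Q)$ fixes the pinned simplex $[0,x_1,\dots,x_k]$ if and only if $g(x_i)=x_i$ for all $i$, which by linearity is equivalent to $g$ fixing $V$ pointwise. Any such $g$ must preserve $V^\perp$, since for $w\in V^\perp$ and $v\in V$ one has $\langle g(w),v\rangle = \langle g(w),g(v)\rangle = \langle w,v\rangle = 0$; hence $g$ restricts to an element of $O(Q|_{V^\perp})$. Conversely, using $\mathbb{F}_q^d = V\oplus V^\perp$, any $h\in O(Q|_{V^\perp})$ extends to the isometry $\mathrm{id}_V\oplus h$ of $(\mathbb{F}_q^d,Q)$, which fixes the simplex. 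These two assignments are mutually inverse group homomorphisms, so the stabilizer of the simplex in $O(Q)$ is isomorphic to $O(Q|_{V^\perp})$.

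Finally, since $Q|_{V^\perp}$ is a non-degenerate form on a space of dimension $d-m$, Proposition~\ref{prop:orthogonalsize} yields $|O(Q|_{V^\perp})| = 2q^{\binom{d-m}{2}}(1+o(1)) = |O_{d-m}(\mathbb{F}_q)|(1+o(1))$, the last equality using that orthogonal groups of equal dimension have the same order up to a $(1+o(1))$ factor. As $\binom{d-m}{2}$ is nonincreasing in $m$ on the range $0\le m\le k\le d$, this quantity is smallest when $m=k$, i.e. in the non-degenerate case, giving size $|O_{d-k}(\mathbb{F}_q)|(1+o(1))$ as claimed. The only step requiring any care is the orthogonal-decomposition claim — that $V\cap V^\perp=0$ really does force $Q\cong Q|_V\oplus Q|_{V^\perp}$ in odd characteristic — but this is classical; everything else is bookkeeping.
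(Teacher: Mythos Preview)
Your proof is correct and follows exactly the same approach as the paper: the paper's argument (which appears in the paragraph \emph{preceding} the lemma statement) simply asserts that $V\cap V^\perp=0$ gives $Q\cong Q|_V\oplus Q|_{V^\perp}$, that ``it is easy to show'' the stabilizer is $O(Q|_{V^\perp})$, and then quotes the size formula. You have just filled in the details of that ``easy to show'' step with the explicit two-way inclusion, which is exactly what was intended.
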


Note that non-degenerate $d$-simplices in $\mathbb{F}_q^d$ are automatically good as $V^{\perp}=0$.

\vskip.125in

In the case the simplex is not good, in the sense that the associated spanned space has isotropic vectors, the argument of Lemma~\ref{lemma: good simplices} can break down, as Gram-Schmidt breaks down when one has vectors of norm 0. This situation will not lead to any big difficulty in this paper, though, so we will ignore it for now.
However as an example to show that things are different, consider the line $(t,it)$ in $\mathbb{F}_q^2$ where $q=1 \text{ mod } 4$ and the pinned  $1$-simplex $[(0,0), (1,i)]$. Under $O_2(\mathbb{F}_q)$, its orbit has size $2(q-1)$ which is twice as big as orbits of non-degenerate good pinned $1$-simplices. Thus the corresponding circle is twice as big as normal and so the stabilizer set is half as big as normal. Thus $(1+o(1))$ no longer suffices to record  the difference. For many arguments a factor of $2$ is no big deal but in arguments involving cancelation of lead order terms, they can play a big role.

As a final goal of this section, let us show that the set of ordered distances generated by degenerate $k$-simplices in $\mathbb{F}_q^d$ are insignificant in the set of all ordered distances of $k$-simplices.

\begin{lemma}
\label{lemma:degenerate}
Let $k \leq d$ and distances be measured with respect to any non-degenerate inner product $Q$. There are at most $O\left(k \left(q^{\frac{(k+1)k}{2}-1}\right)\right)$ ordered distances arising from degenerate $k$-simplices in $\mathbb{F}_q^d$.
\end{lemma}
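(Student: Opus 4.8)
The plan is to count the ordered distance matrices $\mathbb{D}=(d_{ij})_{0\le i<j\le k}$ lying in the image of the degenerate locus directly, building $\mathbb{D}$ block by block and exhibiting an honest loss of one degree of freedom. Translate a $k$-simplex so that $x_0=0$ and write the pinned simplex as $[0,v_1,\dots,v_k]$ with $v_i=x_i-x_0$; degeneracy means $v_1,\dots,v_k$ are linearly dependent. The first observation is that every degenerate $k$-simplex has a well-defined \emph{defect index} $j\in\{1,\dots,k\}$, namely the least $j$ for which $v_1,\dots,v_j$ are already dependent: walking along the chain of subspaces $\operatorname{span}(v_1,\dots,v_\ell)$, $\ell=0,\dots,k$, the dimension fails to increase for the first time at a unique $\ell=j\le k$, and then $v_j=\sum_{i=1}^{j-1}c_iv_i$ for some $c_1,\dots,c_{j-1}\in\mathbb{F}_q$ (the empty sum when $j=1$, i.e. $v_1=0$). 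Since each degenerate simplex has exactly one such $j$, it suffices to bound, for each fixed $j$, the number of distance matrices coming from simplices of defect index $j$, and then to sum over $j$.

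Fix $j$. Organize the entries of $\mathbb{D}$ into columns, where column $\ell$ (for $1\le\ell\le k$) is the $\ell$-tuple $(d_{0\ell},d_{1\ell},\dots,d_{\ell-1,\ell})$; there are $\sum_{\ell=1}^{k}\ell=\binom{k+1}{2}$ entries in all. The second point is that columns $1,\dots,j-1$ already determine the Gram matrix of $v_1,\dots,v_{j-1}$: since $q$ is odd, $\langle v_a,v_b\rangle=\frac12\big(d_{0a}+d_{0b}-d_{ab}\big)$ for $1\le a,b\le j-1$ (with $d_{aa}:=0$). Consequently column $j$ is \emph{forced} by columns $1,\dots,j-1$ together with the parameter vector $(c_1,\dots,c_{j-1})$, because $d_{0j}=\langle\sum_i c_iv_i,\sum_l c_lv_l\rangle=\sum_{i,l}c_ic_l\langle v_i,v_l\rangle$ and $d_{aj}=d_{0a}+d_{0j}-2\sum_i c_i\langle v_a,v_i\rangle$ for $1\le a\le j-1$. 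Hence a distance matrix of defect index $j$ is determined by its columns $1,\dots,j-1$ (at most $q^{\binom{j}{2}}$ choices, bounding crudely by the whole space $\mathbb{F}_q^{\binom{j}{2}}$), the scalars $c_1,\dots,c_{j-1}$ ($q^{j-1}$ choices), and its columns $j+1,\dots,k$ ($q^{\binom{k+1}{2}-\binom{j+1}{2}}$ choices). Multiplying and simplifying, $\binom{j}{2}+(j-1)+\binom{k+1}{2}-\binom{j+1}{2}=\binom{k+1}{2}-1$, so each defect class contributes at most $q^{\binom{k+1}{2}-1}$ distance matrices. Summing over the $k$ possible values of $j$ yields the claimed bound $O\!\big(k\,q^{(k+1)k/2-1}\big)$.

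I do not anticipate a genuine obstacle here; the only thing requiring care is the bookkeeping that pins down the ``loss of one degree of freedom'', i.e.\ verifying that the relation $v_j=\sum c_iv_i$ lets the $j$ distances in column $j$ be recovered from the earlier columns plus only $j-1$ scalars, and that the defect index is genuinely unique so that the union over $j$ costs only the harmless factor $k$ rather than double-counting more severely. Everything else is elementary polarization, and the argument in fact uses neither the hypothesis $k\le d$ nor any non-degeneracy of $Q$ restricted to the span of the simplex, only that $q$ is odd.
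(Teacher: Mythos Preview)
Your argument is correct, and it differs meaningfully from the paper's. The paper reduces the problem to a smaller ambient space: it observes that any degenerate pinned $k$-simplex spans a subspace $V$ of dimension $<k$, uses the Witt decomposition $Q|_V=Q_0\oplus Q_1$ to pass to a non-degenerate quadratic space of dimension at most $k-1$, and then counts ordered distances of $k$-simplices in $\mathbb{F}_q^{k-1}$ by fixing the first $k$ vertices (at most $q^{\binom{k}{2}}$ distance types) and choosing the last vertex freely ($q^{k-1}$ options), giving $O(kq^{\binom{k}{2}+k-1})=O(kq^{\binom{k+1}{2}-1})$.

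Your route avoids the structure theory entirely: you stay in $\mathbb{F}_q^d$, single out the first index $j$ where dependence occurs, and use polarization to show that column $j$ of the distance matrix is forced by the earlier columns together with $j-1$ coefficients, yielding the same exponent $\binom{k+1}{2}-1$ per defect class. This is more elementary---no Witt decomposition, no classification of quadratic forms over $\mathbb{F}_q$---and, as you note, it does not actually use $k\le d$ or non-degeneracy of $Q$ on any subspace. The paper's approach, on the other hand, makes the geometric picture (realizing degenerate simplices inside a genuine $\mathbb{F}_q^{k-1}$) more explicit, which ties in with how the rest of the appendix is organized. Both are short and valid; yours has the cleaner bookkeeping.
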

\begin{proof}
It suffices to work with pinned simplices. Note that a degenerate pinned $k$-simplex lies in a vector space $V$ of dimension $<k$ which inherits an
inner product $Q|V$ from the one on $\mathbb{F}_q^d$. This inherited inner product might be degenerate. However it is known that $Q|_V=Q_0 \oplus Q_1$ where $Q_0$ is the null inner product and $Q_1$ is a
non-degenerate inner product. The ordered distances generated by our $k$-simplex would hence be identical to one lying in the vector space supporting
$Q_1$. Thus the set of ordered distances of degenerate $k$-simplices in $\mathbb{F}_q^d$ lies in the union of the set of ordered distances
of $k$-simplices in $(W,Q')$ where $(W,Q')$ range over all non-degenerate quadratic forms of dimension $m < k$.  As there are at most two distinct
non-degenerate quadratic forms per dimension, it follows that the set of ordered distances of degenerate  $k$-simplices in $\mathbb{F}_q^d$ is no
more than $2(k-1)$ times the set of ordered distances of $k$-simplices in $\mathbb{F}_q^{k-1}$ with a non-degenerate inner product.

Consider such a pinned $k$-simplex in $\mathbb{F}_q^{k-1}$. Leaving out the last vertex of the $k$-simplex yields a $(k-1)$-pinned simplex in $\mathbb{F}_q^{k-1}$ which we can assume to be non-degenerate WLOG and which determines at most $O(q^{k \choose 2})$ ordered distances as mentioned earlier.
Adding back the last simplex, we note that we have $q^{k-1}$ choices of location which each determine at most one final ordered distance vector for
each scenario of distance vector from the first $k$ vertices in that pinned $k$-simplex.

Thus there can be at most $O(q^{{k \choose 2}+(k-1)})$ ordered distances corresponding to $k$-simplices in $\mathbb{F}_q^{k-1}$. Thus the
number of ordered distances of degenerate $k$ simplices in $\mathbb{F}_q^d$ is at most $2kO(q^{{k \choose 2} + k-1}).$

As $\binom{k}{2} +(k-1) = \binom{k+1}{2}-1 < \binom{k+1}{2}$ we are done.
\end{proof}

\begin{theorem}
Let $Q$ be a non-degenerate quadratic form on $\mathbb{F}_q^d$ and let $k \leq d$. Then the number of congruence classes of non-degenerate $k$-simplices is
$O( q^{{k+1 \choose 2}} )$ as $q \to \infty$. The number of congruence classes of degenerate $k$-simplices is of smaller order and so this count is also the order of the full count of the number of congruence classes of $k$-simplices.
\label{thm: distancecount}
\end{theorem}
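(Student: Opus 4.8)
\subsection*{Proof proposal}

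The plan is to split the count into non-degenerate and degenerate $k$-simplices and estimate each separately. For the non-degenerate part I would invoke the rigidity fact recalled in the introduction and proved in \cite{CEHIK12}: two non-degenerate $k$-simplices are congruent if and only if they have the same ordered distance matrix $\mathbb{D}=(d_{ij})$, $d_{ij}=\|x_i-x_j\|$. Hence the number of congruence classes of non-degenerate $k$-simplices is at most the number of admissible distance matrices. Such a $\mathbb{D}$ is a symmetric $(k+1)\times(k+1)$ matrix with zero diagonal, since $d_{ii}=\|x_i-x_i\|=Q(0)=0$; it is therefore freely determined by its $\binom{k+1}{2}$ strictly upper-triangular entries, so there are at most $q^{\binom{k+1}{2}}$ of them. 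This already gives the asserted upper bound.

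To see that $\binom{k+1}{2}$ is genuinely the order of magnitude (which makes the last sentence of the theorem meaningful) I would establish a matching lower bound for the non-degenerate count by showing that a positive proportion of these matrices is actually realized inside $(\mathbb{F}_q^d,Q)$. By Witt's extension theorem a symmetric zero-diagonal $\mathbb{D}$ is the distance matrix of a non-degenerate $k$-simplex spanning an appropriate subspace of $\mathbb{F}_q^d$ as soon as the associated Gram form on the span is non-degenerate of rank $k$ and occurs as a subform of $Q$. Non-degeneracy of that form is the non-vanishing of a fixed (Cayley--Menger type) polynomial in the entries $d_{ij}$, which is not identically zero, so it holds for $\gg q^{\binom{k+1}{2}}$ choices of $\mathbb{D}$; and the subform condition at worst pins down the discriminant class of the Gram form, which — using the classification of non-degenerate forms over $\mathbb{F}_q$ from the appendix — still leaves a positive proportion of matrices. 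Thus the non-degenerate count is $\Theta(q^{\binom{k+1}{2}})$.

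For the degenerate $k$-simplices I would not use distance matrices at all (that map is not injective here, e.g. collinear versus non-collinear null triangles), but count $O(Q)$-orbits directly. Translating one vertex to the origin, a degenerate $k$-simplex becomes a tuple $(y_1,\dots,y_k)$ whose span $V$ has dimension $m$ with $0\le m\le k-1$ (here $k\le d$ is used, so $V$ fits inside $\mathbb{F}_q^d$). Pick an $m$-element index set $S$ with $\{y_i:i\in S\}$ a basis of $V$. By Witt's extension theorem the $O(Q)$-orbit of the independent tuple $(y_i)_{i\in S}$ is determined by its $m\times m$ Gram matrix, so there are at most $q^{\binom{m+1}{2}}$ of these; once such a representative is fixed, the remaining $k-m$ vectors lie in $V$ and so are among $q^m$ vectors each. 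Hence the number of congruence classes with $\dim V=m$ is at most $\binom{k}{m}\,q^{\binom{m+1}{2}+m(k-m)}$. Since $\binom{m+1}{2}+m(k-m)=mk-\binom{m}{2}$ is increasing in $m$ on $[0,k]$, for $m\le k-1$ it is at most $\binom{k+1}{2}-1$; summing over the $k$ values of $m$ bounds the degenerate count by $O\!\left(k\,q^{\binom{k+1}{2}-1}\right)$. (This reproves Lemma~\ref{lemma:degenerate} for congruence classes directly.) Adding the two estimates gives a total of $q^{\binom{k+1}{2}}+O\!\left(k\,q^{\binom{k+1}{2}-1}\right)$, and together with the lower bound on the non-degenerate count this yields $\Theta(q^{\binom{k+1}{2}})$ overall.

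The two orbit counts are routine. The one place needing care is isotropic spans: for the non-degenerate upper bound the distance matrix is still well-defined and the cited rigidity fact still applies, so nothing changes there; the subtlety only surfaces in the realizability claim of the lower bound and in the degenerate count. For the degenerate count, however, the argument above never assumes $Q|_V$ is non-degenerate — it uses Witt's extension theorem only for tuples in the ambient non-degenerate space $\mathbb{F}_q^d$ — so this is merely a point to double-check rather than a real obstacle, and I expect the proof to be short.
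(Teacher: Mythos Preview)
Your argument is correct, and for the non-degenerate upper bound it is exactly the paper's: congruence class $\leftrightarrow$ ordered distance matrix via Witt, hence at most $q^{\binom{k+1}{2}}$.

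The degenerate count is where you genuinely diverge. The paper simply invokes Lemma~\ref{lemma:degenerate}, which bounds the number of \emph{ordered distance vectors} produced by degenerate $k$-simplices; but as the paper itself remarks, the map from congruence classes to distance vectors is not injective on degenerate simplices, so that lemma does not literally bound the number of degenerate congruence classes. Your direct orbit count---fix a spanning index set $S$, use Witt on the ambient non-degenerate space to reduce to the $m\times m$ Gram matrix of $(y_i)_{i\in S}$, then record the remaining $k-m$ vertices by their coordinates in the $m$-dimensional span---bounds congruence classes themselves and yields the same exponent $\binom{k+1}{2}-1$. This is the cleaner route for the theorem as stated; the paper's route has the advantage of reusing Lemma~\ref{lemma:degenerate}, whose statement (about distance vectors) is what is actually used elsewhere when $T_k^d(E)$ is parametrized by $\mathbb{D}$.

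You also add a lower bound $\gg q^{\binom{k+1}{2}}$ for the non-degenerate count, which the paper does not spell out. One small sharpening there: when $k<d$, \emph{every} non-degenerate $k$-dimensional form over $\mathbb{F}_q$ occurs as a subform of $Q$ (just complement by the form of the appropriate discriminant in dimension $d-k$), so the discriminant restriction you mention is only an issue when $k=d$.
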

\begin{proof}
Two non-degenerate pinned $k$-simplices are congruent if and only if they have the same ordered distance. This is because if $\sigma$ and $\sigma'$ are two such, and $V, V'$ are the vector spaces they generate, $Q|V \cong Q|V'$ and so there is an isometry  $Q|V \to Q|V'$ which extends to an isometry in
$O(Q)$ by Witt's Theorem. Hence there are at most
$q^{{k+1 \choose 2}}$ such congruence classes as that is the maximum number of possibilities for ordered distance vectors of $k$-simplices. From lemma~\ref{lemma:degenerate}, the degenerate $k$-simplices contribute a number of congruence
classes with strictly smaller order than this, hence the theorem follows.
\end{proof}

\newpage

\end{document}